%------------------------------------------------------------------------------
% Beginning of journal.tex
%------------------------------------------------------------------------------
%
% AMS-LaTeX version 2 sample file for journals, based on amsart.cls.
%
%        ***     DO NOT USE THIS FILE AS A STARTER.      ***
%        ***  USE THE JOURNAL-SPECIFIC *.TEMPLATE FILE.  ***
%
% Replace amsart by the documentclass for the target journal, e.g., tran-l.
%
\documentclass{amsart}
\usepackage{amsmath}

\usepackage{mathrsfs}
\usepackage{amsfonts}
\usepackage{mathrsfs}
\usepackage{amssymb}
\usepackage{amsmath,amssymb}

\newtheorem{theorem}{Theorem}[section]
\newtheorem{lemma}[theorem]{Lemma}
\newtheorem{proposition}[theorem]{Proposition}

\theoremstyle{definition}
\newtheorem{definition}[theorem]{Definition}

\theoremstyle{remark}
\newtheorem{remark}[theorem]{Remark}

\numberwithin{equation}{section}

%    Absolute value notation

%    Blank box placeholder for figures (to avoid requiring any
%    particular graphics capabilities for printing this document).

\begin{document}

\title[ Generalized Naiver-Stokes equations in local $Q$-type spaces]
{Generalized Naiver-Stokes equations with initial  data in local
$Q$-type spaces}

%    Information for first author
\author{Pengtao Li}
\address{School of Mathematics, Peking University, Beijing, 100871, China}
\curraddr{Department of Mathematics and Statistics, Memorial
University of Newfoundland, St. John's, NL A1C 5S7, Canada}
\email{li\_ptao@163.com}
\author{Zhichun Zhai}
\address{Department of Mathematics and Statistics, Memorial University of Newfoundland, St. John's, NL A1C 5S7, Canada}
\curraddr{}
 \email{a64zz@mun.ca}
\thanks{Project supported in part  by Natural Science and
Engineering Research Council of Canada.}

%    General info
\subjclass[2000]{Primary 35Q30; 76D03; 42B35; 46E30}

\date{}

\dedicatory{}

\keywords{ Generalized Naiver-Stokes equations; $Q^{\beta,
-1}_{\alpha,\ loc}(\mathbb{R}^{3});$ Lorentz spaces}

\begin{abstract}
In this paper, we establish a link between Leray mollified solutions
of the three-dimensional generalized Naiver-Stokes equations and
mild solutions for initial data in the adherence of the test
 functions for the norm of $Q^{\beta,-1}_{\alpha, \ loc}(\mathbb{R}^{3}).$
 This result  applies to the usual incompressible Navier-Stokes
 equations and deduces a known  link.
\end{abstract}
\maketitle

\section{Introduction}
This paper studies the relationship between  Leray mollified
solutions and mild solutions to the generalized Naiver-Stokes
equations in $\mathbb{R}^{3}:$
\begin{equation}\label{eq1e}
\left\{\begin{array} {l@{\quad \quad}l}
\partial_{t}u+(-\triangle)^{\beta}u+(u\cdot \nabla)u-\nabla p=0,
& \hbox{in}\ \mathbb{R}^{1+3}_{+};\\
\nabla \cdot u=0, & \hbox{in}\ \mathbb{R}^{1+3}_{+};\\
u|_{t=0}=u_{0}, &\hbox{in}\ \mathbb{R}^{3}, \end{array} \right.
\end{equation}
for $\beta\in (1/2,1],$ where $(-\triangle)^{\beta}$ is the
fractional Laplacian with respect to $x$ defined by
$$
\widehat{(-\triangle)^{\beta}u}(t,\xi)
 =|\xi|^{2\beta}\widehat{u}(t,\xi)
 $$
through Fourier transform.
  Here $u$ and $p$ are non-dimensional
quantities corresponding to the velocity of the fluid and its
pressure. $u_{0}$ is the initial data and for the sake of
simplicity, the fluid is supposed to fill the whole space
$\mathbb{R}^{3}.$ Equations (\ref{eq1e}) have been studied
intensively, see Cannone \cite{M Cannone},   Giga and
Miyakawa\cite{Y Giga T Miyakawa}, Kato \cite{T Kato}, Koch and
Tataru \cite{H. Koch D. Tataru}, Xiao \cite{J. Xiao 1}, Lions
\cite{J L Lions}, Wu \cite{J Wu 2}-\cite{J Wu 3}, Li and Zhai
\cite{Li Zhai}.

When $\beta=1$,  equations (\ref{eq1e}) become the usual
incompressible Naiver-Stokes equations. In dimensional 3, the global
existence, uniqueness and regularity of the solutions for the usual
Naiver-Stokes equations are long-standing open problem of fluid
dynamics and the regularity problems is of course a millennium prize
problem. Generally speaking, there are two specific approaches in
the study of the existence of solutions to the three-dimensional
incompressible Naiver-Stokes equations. The first one is due to
Leray \cite{Leray} and the second is due to Kato \cite{T Kato}. We
refer the readers to Cannone \cite{Cannone} and
Lemari$\acute{e}$-Rieusset \cite{LR} for further information.

For general $\beta$, we can also define  the mild  and mollified
solutions separately as follows.

The generalized Naiver-Stokes system is  equivalent to the  fixed
point problem:
\begin{equation}\label{eq2}
u(t,x)=e^{-t(-\Delta)^{\beta}}u_{0}(x)-B(u,u)(t,x),
\end{equation}
where the bilinear form $B(u,v)$ is defined by
$$B(u,v)=\int_{0}^{t}e^{-(t-s)(-\Delta)^{\beta}}\mathbb{P}\nabla\cdot(u\otimes v)(s,x)ds.$$
Here $e^{-t(-\Delta)^{\beta}}$ denotes the convolution operator
generated by the symbol
$$\widehat{[e^{-t(-\Delta)^{\beta}}]}(\xi)=e^{-t|\xi|^{2\beta}}$$ and
$\mathbb{P}$ denotes the Leray projector onto the divergence free
vector filed.
\begin{definition}\label{Def1}
A mild solution to the generalized Naiver-Stokes equations
(\ref{eq1e}) is a solution to equations (\ref{eq2}) obtained via a
fixed point procedure.
\end{definition}

The mollified solutions are constructed in the same way as mild
solutions, but with a slightly different model. Indeed, instead of
the term $u\otimes u$ involved in the $(GNS)$ equations, we look for
something smoother. Let $\omega\in\mathcal{D}(\mathbb{R}^{3})$ with
$\omega>0$ and $\int_{\mathbb{R}^{3}}\omega(x)dx=1$. Then for
$\varepsilon>0$, the mollified generalized Navier-Stokes equations
are given by:
\begin{equation}\label{eq3}
\left\{\begin{array} {l@{\quad \quad}l}
\partial_{t}u+(-\triangle)^{\beta}u+((u\ast\omega_{\varepsilon})\cdot \nabla)u-\nabla p=0,
& \hbox{in}\ \mathbb{R}^{1+3}_{+};\\
\nabla \cdot u=0, & \hbox{in}\ \mathbb{R}^{1+3}_{+};\\
u|_{t=0}=u_{0}, &\hbox{in}\ \mathbb{R}^{3} \end{array} \right.
\end{equation}
with
$\omega_{\varepsilon}(x)=\frac{1}{\varepsilon^{3}}\omega(\frac{x}{\varepsilon}).$

Similar to the equations  (\ref{eq1e}), equations (\ref{eq3}) can be
rewritten as a fixed point problem:
\begin{equation}\label{eq4}
u_{\varepsilon}=e^{-t(-\Delta)^{\beta}}u_{0}-B_{\varepsilon}(u_{\varepsilon},u_{\varepsilon}),
\end{equation}
where the bilinear operator $B_{\varepsilon}$ is defined by
$$B_{\varepsilon}(u,u)=\int_{0}^{t}e^{-(t-s)(-\Delta)^{\beta}}\mathbb{P}\nabla\cdot((u\ast\omega_{\varepsilon})\otimes u)(s)ds.$$
\begin{definition}
The mollified solution to equations (\ref{eq1e}) is the sequence
$\{u_{\varepsilon}\}_{\varepsilon>0}$ of the solutions to the system
(\ref{eq4}) for $\varepsilon>0$.
\end{definition}

In \cite{LRP}, for $\beta=1$ of equations (\ref{eq1e}),  that is,
the usual incompressible Navier-Stokes equations,
Lemari$\acute{e}$-Rieusset and Prioux established a link between
these two solutions. They proved that if the initial data
$u_{0}\in\overline{\mathcal{D}(\mathbb{R}^{3})}^{bmo^{-1}(\mathbb{R}^{3})}$,
then there exists $T>0$ such that the mollified solutions
$u_{\varepsilon}
\in\overline{\mathcal{D}((0,T]\times\mathbb{R}^{3})}^{X^{1}_{0;T}(\mathbb{R}^{3})}$
constructed via the theory of Leray converges, when $\varepsilon$
tends to $0$, to the mild solution given by Kato, for $t\in (0,T)$.

 In \cite{Li Zhai}, inspired by Xiao's paper \cite{J. Xiao 1}, we considered
 the well-posedness and regularity of
equations (\ref{eq1e}) with initial data in some new critical spaces
$Q^{\beta, -1}_{\alpha; \infty}(\mathbb{R}^{n})$. In that paper, we
proved that for initial data $u_{0}\in Q^{\beta,-1}_{\alpha;
\infty}(\mathbb{R}^{n})$ there exists a unique mild  solution in the
space $X^{\beta}_{\alpha;\infty}$, where the space
$Q^{\beta,-1}_{\alpha;\infty}(\mathbb{R}^{n})$ occurring above is a
class of spaces which own a structure similar to the space
$BMO^{-1}(\mathbb{R}^{n})$ in \cite{H. Koch D. Tataru} and
$Q^{-1}_{\alpha; \infty}(\mathbb{R}^{n})$ in \cite{J. Xiao 1}. It is
easy to see that if $\alpha=-\frac{n}{2}$ and $\beta=1$,
$Q^{\beta,-1}_{\alpha;
\infty}(\mathbb{R}^{n})=BMO^{-1}(\mathbb{R}^{n})$, and if
$\alpha\in(0,1)$ and $\beta=1$, $Q^{\beta,-1}_{\alpha ;
\infty}(\mathbb{R}^{n})=Q^{-1}_{\alpha; \infty}(\mathbb{R}^{n}).$
Here $Q^{-1}_{\alpha; \infty}(\mathbb{R}^{n})$ is the derivative
space of $Q_{\alpha}(\mathbb{R}^{n}),$ see Xiao \cite{J. Xiao 1},
Dafni and Xiao \cite{G. Dafni J. Xiao}-\cite{G. Dafni J. Xiao 1},
Essen,  Janson, Peng  and  Xiao \cite{M. Essen S. Janson L. Peng J.
Xiao}. Therefore our well-posed result generalized the result of
Koch and Tataru \cite{H. Koch D. Tataru} and that of Xiao \cite{J.
Xiao 1}.

 The main goal of this paper is to  establish
 a relation between the mild solutions obtained  in \cite{J. Xiao 1} and   \cite{Li Zhai}
  and the mollified solutions for the
 equations (\ref{eq3}). In fact, our main results mean that
 for  initial data $u_{0}\in\overline{\mathcal{D}(\mathbb{R}^{3})}^{Q^{\beta,-1}_{\alpha,\
loc}(\mathbb{R}^{3})},$ with $\beta\in (1/2,1],$
 there exists $T>0$
such that the sequence $\{u_{\varepsilon}\}_{\varepsilon>0}
\in\overline{\mathcal{D}((0,T]\times\mathbb{R}^{3})}^{X^{\beta}_{\alpha,T}(\mathbb{R}^{3})}$
of solutions to (\ref{eq3}) converges, when
$\varepsilon\longrightarrow0,$  to the mild solution obtained in
\cite{J. Xiao 1} and  \cite{Li Zhai}, for $t\in (0,T).$ For the
usual incompressible Navier-Stokes equations,  when $\alpha=0,$ our
main result goes back to Lemari$\acute{e}$-Rieusset and Prioux
\cite[Theorem 1.1]{LRP}. However, it is worth pointing out that
their Theorem does not deduce our results even though
$Q^{1,-1}_{\alpha;\ loc}(\mathbb{R}^{3})$ is a subspace of
$bmo^{-1}(\mathbb{R}^{3}),$ since $X^{1}_{\alpha;T}(\mathbb{R}^{3})$
is proper subspace of $X^{1}_{0;T}(\mathbb{R}^{3})$ when
$0<\alpha<1.$

In the following, we give some definitions and known results. The
first one is  the space $Q^{\beta,-1}_{\alpha,loc}(\mathbb{R}^{n})$
defined as follows.

\begin{definition}
For $\alpha>0$ and
 $\max\{1/2,\alpha\}<\beta\leq 1$ with $\alpha+\beta-1\geq 0,$  $Q^{\beta,-1}_{\alpha,loc}(\mathbb{R}^{n})$
is the space of tempered distributions $f$ on $\mathbb{R}^{n}$ such
that, for all $T\in (0,\infty),$
$$\sup_{0<r^{2\beta}<T}\sup_{x_{0}\in\mathbb{R}^{n}}r^{2\alpha-n+2\beta-2}\int^{r^{2\beta}}_{0}
\int_{|x-x_{0}|<r}|e^{-t(-\Delta)^{\beta}}f(y)|^{2}\frac{dydt}{t^{\alpha/\beta}}<\infty.$$
The norm on $Q^{\beta,-1}_{\alpha,loc}(\mathbb{R}^{n})$ is defined
by
$$\|f\|_{Q^{\beta,-1}_{\alpha,loc}(\mathbb{R}^{n})}=\left(\sup_{0<r^{2\beta}<1}\sup_{x_{0}\in\mathbb{R}^{n}}r^{2\alpha-n+2\beta-2}\int^{r^{2\beta}}_{0}
\int_{|x-x_{0}|<r}|e^{-t(-\Delta)^{\beta}}f(y)|^{2}\frac{dydt}{t^{\alpha/\beta}}\right)^{1/2}.$$
\end{definition}
 In \cite{Li Zhai}, it was proved that that the space $Q^{\beta,-1}_{\alpha,loc}(\mathbb{R}^{n})$
consist of the derivatives of functions in
$Q^{\beta}_{\alpha}(\mathbb{R}^{n})$ which is composed of all
measurable functions with
$$\sup_{I}\ [l(I)]^{2(\alpha+\beta-1)-n}\int_{I}\int_{I}\frac{|f(x)-f(y)|^{2}}{|x-y|^{n+2(\alpha-\beta+1)}}dxdy<\infty$$
where the supremum is taken over all cubes $I$ with the edge length
$l(I)$ and the edges parallel to the coordinate axes in
$\mathbb{R}^{n}$.

We now introduce the space $X^{\beta}_{\alpha,T}(\mathbb{R}^{n}).$

\begin{definition}  \label{X space} Let $\alpha>0$ and
 $\max\{1/2,\alpha\}<\beta\leq 1$ with $\alpha+\beta-1\geq 0$.\\
(i)  A tempered distribution $f$ on $\mathbb{R}^{n}$ belongs to
$Q_{\alpha;T}^{\beta,-1}(\mathbb{R}^{n})$ provided
 $$\|f\|_{Q_{\alpha;T}^{\beta,-1}(\mathbb{R}^{n})}=\sup_{x\in\mathbb{R}^{n},r^{2\beta}\in(0,T)}\left(r^{2\alpha-n+2\beta-2}
\int_{0}^{r^{2\beta}}\int_{|y-x|<r}| K_{t}^\beta\ast
f(y)|^{2}t^{-\frac{\alpha}{\beta}}dydt\right)^{1/2}<\infty;
$$
 (ii) A tempered distribution $f$ on $\mathbb{R}^{n}$ belongs to
$\overline{VQ^{\beta,-1}_{\alpha}}(\mathbb{R}^{n})$ provided
$\lim\limits_{T\longrightarrow
0}\|f\|_{Q^{\beta,-1}_{\alpha;T}(\mathbb{R}^{n})}=0;$\\
 (iii) A function $g$ on $\mathbb{R}^{1+n}_{+}$ belongs to the space
$X^{\beta}_{\alpha;T}(\mathbb{R}^{n})$ provided
\begin{eqnarray*}
\|g\|_{X^{\beta}_{\alpha;T}(\mathbb{R}^{n})}&=&\sup_{t\in(0,T)}t^{1-\frac{1}{2\beta}}\|g(t,\cdot)\|_{L^{\infty}(\mathbb{R}^{n})}\\
&&+\sup_{x\in
\mathbb{R}^{n},r^{2\beta}\in(0,T)}\left(r^{2\alpha-n+2\beta-2}\int_{0}^{r^{2\beta}}\int_{|y-x|<r}|g(t,y)|^{2}t^{-\alpha/\beta}dydt\right)^{1/2}<\infty.
\end{eqnarray*}
\end{definition}

In Xiao \cite{J. Xiao 1} and Li and Zhai \cite{Li Zhai}, they proved
the following well-posedness results about the equations
(\ref{eq1e}) for $\beta=1$ and $\frac{1}{2}<\beta<1$, respectively.

\begin{theorem} %\label{th LZ}(\cite{Li Zhai}, Theorem 4.12)
Let $n\geq 2,$ $\alpha>0$ and $\max\{\alpha,1/2\}<\beta\leq1$ with $\alpha+\beta-1\geq0$. Then\\
(i) The generalized Navier-Stokes system (\ref{eq1e}) has a unique
small global mild solution in
$(X^{\beta}_{\alpha;\infty}(\mathbb{R}^{n}))^{n}$ for all initial
data $a$ with $\nabla\cdot a=0$ and
$\|a\|_{(Q_{\alpha;\infty}^{\beta,-1}(\mathbb{R}^{n}))^{n}}$ being small.\\
(ii) For any $T\in(0,\infty)$ there is an $\varepsilon>0$ such that
the generalized Navier-Stokes system (\ref{eq1e}) has a unique small
mild solution in $(X_{\alpha; T}^{\beta}(\mathbb{R}^{n}))^{n}$ on
$(0,T)\times \mathbb{R}^{n}$ when the initial data $a$ satisfies
$\nabla\cdot a=0$ and
$\|a\|_{(Q_{\alpha;T}^{\beta,-1}(\mathbb{R}^{n}))^{n}}\leq
\varepsilon.$ In particular for all $a\in
(\overline{VQ_{\alpha}^{\beta,-1}}(\mathbb{R}^{n}))^{n}$ with
$\nabla\cdot a=0$ there exists a unique small local mild solution in
$(X_{\alpha;T}^{\beta}(\mathbb{R}^{n}))^{n}$ on $(0,T)\times
\mathbb{R}^{n}.$
 \end{theorem}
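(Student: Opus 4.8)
The plan is to recast the fixed point formulation \eqref{eq2} as an abstract quadratic equation $u=y-B(u,u)$ in the Banach space $(X^\beta_{\alpha;\infty}(\mathbb{R}^n))^n$ for part (i), respectively $(X^\beta_{\alpha;T}(\mathbb{R}^n))^n$ for part (ii), where $y=e^{-t(-\Delta)^\beta}a$ is the free evolution of the initial datum. I would then invoke the standard contraction (Picard) lemma: if $B$ is a bounded bilinear map on a Banach space $X$ with $\|B(u,v)\|_X\le \eta\|u\|_X\|v\|_X$ and $\|y\|_X$ is small enough relative to $1/\eta$, then the equation admits a unique small solution, obtained as the limit of the Picard iterates. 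In this way everything reduces to two analytic estimates: the linear bound $\|e^{-t(-\Delta)^\beta}a\|_{X^\beta_\alpha}\lesssim\|a\|_{Q^{\beta,-1}_\alpha}$ and the bilinear bound $\|B(u,v)\|_{X^\beta_\alpha}\lesssim\|u\|_{X^\beta_\alpha}\|v\|_{X^\beta_\alpha}$, both of which are scale invariant so that $\eta$ can be taken uniform in $T$.

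For the linear estimate I would split the $X^\beta_\alpha$-norm into its two defining pieces. The Carleson-type piece of $\|e^{-t(-\Delta)^\beta}a\|_{X^\beta_\alpha}$ coincides by construction with the defining quantity of $\|a\|_{Q^{\beta,-1}_\alpha}$, so no work is needed there. For the weighted $L^\infty$ piece $\sup_t t^{1-1/(2\beta)}\|e^{-t(-\Delta)^\beta}a\|_{L^\infty}$ I would use the semigroup factorization $e^{-t(-\Delta)^\beta}a=K^\beta_{t/2}\ast\bigl(e^{-(t/2)(-\Delta)^\beta}a\bigr)$, so that the smooth, rapidly decaying kernel $K^\beta_{t/2}$ is convolved against a function whose local $L^2$ mass is governed by the Carleson condition; estimating this convolution on dyadic annuli about the evaluation point and summing the resulting geometric series bounds the $L^\infty$ norm by $t^{-(1-1/(2\beta))}\|a\|_{Q^{\beta,-1}_\alpha}$.

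The bilinear estimate is the main obstacle. Writing $B(u,v)(t)=\int_0^t\Phi_{t-s}\ast(u\otimes v)(s)\,ds$, where $\Phi_\tau$ is the kernel of $e^{-\tau(-\Delta)^\beta}\mathbb{P}\nabla\cdot$, I would first record that, since $\mathbb{P}\nabla\cdot$ costs one derivative, $\Phi_\tau$ obeys an Oseen-type bound $|\Phi_\tau(x)|\lesssim(\tau^{1/(2\beta)}+|x|)^{-(n+1)}$, whence $\|\Phi_\tau\|_{L^1}\lesssim\tau^{-1/(2\beta)}$. For the weighted $L^\infty$ piece of $\|B(u,v)\|$ I would split $\int_0^t=\int_{t/2}^t+\int_0^{t/2}$. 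On $[t/2,t]$ the variable $s$ is comparable to $t$, so the product of the $L^\infty$ pieces of $u$ and $v$ together with $\int_{t/2}^t(t-s)^{-1/(2\beta)}\,ds$ produces the correct power of $t$, and the condition $\beta>1/2$ guarantees convergence of this time integral. On $[0,t/2]$ the kernel is no longer singular but $s$ ranges down to $0$, where the $L^\infty$ bound alone diverges (already when $\beta=1$); here one must instead exploit the Carleson pieces of $\|u\|$ and $\|v\|$ via Cauchy--Schwarz and the $L^2$-type control of $\iint|u|^2$ and $\iint|v|^2$ over Carleson boxes. This is exactly the point at which the space $X^\beta_\alpha$, rather than a pure $L^\infty$ space, is indispensable. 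The Carleson piece of $\|B(u,v)\|$ is treated by a parallel but more delicate argument: localize to a box $(0,r^{2\beta})\times\{|y-x|<r\}$, split $u\otimes v$ into the contribution of a fixed dilate of the box and its complement, bound the local part by a square-function/$L^2$ estimate and the nonlocal part by the kernel decay. The admissibility conditions $\max\{\alpha,1/2\}<\beta\le1$ and $\alpha+\beta-1\ge0$ are precisely what keep all the scaling exponents in these space-time integrations convergent.

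Finally, combining the linear and bilinear estimates with the contraction lemma yields part (i): when $\|a\|_{Q^{\beta,-1}_{\alpha;\infty}}$ is small the hypothesis of the lemma holds and one obtains a unique small solution in $(X^\beta_{\alpha;\infty})^n$, uniqueness in the small ball coming from the contraction property. For part (ii) the same scheme runs on the finite interval $(0,T)$, and since $\eta$ is uniform in $T$ there is a smallness threshold $\varepsilon$ independent of $T$. The ``in particular'' clause then follows because, by definition, $a\in\overline{VQ^{\beta,-1}_\alpha}$ means $\|a\|_{Q^{\beta,-1}_{\alpha;T}}\to0$ as $T\to0$; choosing $T$ small enough that this norm drops below $\varepsilon$ makes the smallness hypothesis automatic and delivers the unique local mild solution in $(X^\beta_{\alpha;T})^n$.
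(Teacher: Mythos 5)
Your proposal is correct and follows essentially the same route as the paper: the paper (quoting Xiao and Li--Zhai) reduces the theorem to Picard contraction in $X^{\beta}_{\alpha;T}$ built on the linear estimate $\|e^{-t(-\Delta)^{\beta}}a\|_{X^{\beta}_{\alpha;T}}\leq C\|a\|_{Q^{\beta,-1}_{\alpha;T}}$ and the key bilinear bound (\ref{eq13}), which its Remark explicitly identifies as the core of the argument. Your sketches of the linear bound (semigroup factorization for the $L^{\infty}$ piece, the Carleson piece being definitional) and of the bilinear bound (Oseen kernel decay, near/far time splitting, local/nonlocal Carleson-box analysis) are exactly the ingredients of the cited proofs, including the scale-invariance argument that makes the contraction constant uniform in $T$ and the treatment of the $\overline{VQ^{\beta,-1}_{\alpha}}$ case.
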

\begin{remark}
The core of the proof of the above theorem is the following
inequality: for $u$ and $v\in X^{\beta}_{\alpha
;T}(\mathbb{R}^{n})$, we have
\begin{equation}\label{eq13}
\|B(u,v)\|_{X^{\beta}_{\alpha;T}(\mathbb{R}^{n})}\leq\|u\|_{X^{\beta}_{\alpha;T}(\mathbb{R}^{n})}\|v\|_{X^{\beta}_{\alpha;T}(\mathbb{R}^{n})}.
\end{equation}
The above inequality will play an important role in  this paper.
\end{remark}
 We recall the definition of the Lorentz space
$L^{p,\infty}(\mathbb{R}^{n})$ for $1<p<\infty.$
\begin{definition}
Let $1<p<\infty.$ A function $f\in L^{1}_{loc}(\mathbb{R}^{n})$ is
in the Lorentz space $L^{p,\infty}(\mathbb{R}^{n})$ if and only
if
$$\forall\lambda>0,\quad \left|\left\{x\in\mathbb{R}^{n}, |f(x)|>\lambda\right\}\right|\leq\frac{C}{\lambda^{p}}.$$
For $p=\infty$, we have
$L^{\infty,\infty}(\mathbb{R}^{n})=L^{\infty}(\mathbb{R}^{n}).$
\end{definition}
In \cite{LRP}, the authors introduced a new class of  Lorentz
spaces.
\begin{definition}
For $1<p<\infty$, the weak Lorentz space
$\widetilde{L}^{p,\infty}((0,T))$ is  the adherence of functions in
$L^{\infty}((0,T))$ for the norm in the Lorentz space
$L^{p,\infty}((0,T))$, that is,
$$\widetilde{L}^{p,\infty}((0,T))=\overline{L^{\infty}((0,T))}^{L^{p,\infty}((0,T))}.$$
\end{definition}

Let $\widetilde{L}^{p,\infty}((0,T), L^{q,\infty}(\mathbb{R}^{3}))$
be the space  of all measurable functions $f$ on
$(0,T)\times\mathbb{R}^{3}$ such that
$\|f(t,\cdot)\|_{L^{q,\infty}(\mathbb{R}^{3})}\in
\widetilde{L}^{p,\infty}((0,T)).$ Then the following proposition
holds.

\begin{proposition}\label{pro1}(\cite[Proposition 2.7 ]{LRP}) Let
$T>0$, $1<p<\infty$ and $1<q\leq\infty$. The following properties
are equivalent:

(1)\quad $f\in\widetilde{L}^{p,\infty}((0,T),
L^{q,\infty}(\mathbb{R}^{3}));$

(2)\quad For all $\lambda>0$, there exists a constant $C(\lambda)$
such that $C(\lambda)\longrightarrow 0$\quad$ \hbox{as}\
\lambda\longrightarrow\infty$ and
$$\left|\left\{\|f(t)\|_{L^{q,\infty}(\mathbb{R}^{3})}>\lambda\right\}\right|<\frac{C(\lambda)}{\lambda^{p}};$$

(3)\quad For all $\varepsilon>0$, there exists $f_{1}\in
L^{\infty}((0,T),L^{q,\infty}(\mathbb{R}^{3}))$ and $f_{2}\in
L^{p,\infty}((0,T),L^{q,\infty}(\mathbb{R}^{3}))$ such that
$$\|f_{2}\|_{L^{p,\infty}((0,T),L^{q,\infty}(\mathbb{R}^{3}))}\leq \varepsilon\quad\text{ and }\quad f=f_{1}+f_{2}.$$
\end{proposition}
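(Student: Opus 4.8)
The plan is to reduce the whole statement to a question about the single nonnegative scalar function
$$g(t):=\|f(t,\cdot)\|_{L^{q,\infty}(\mathbb{R}^{3})},\qquad t\in(0,T),$$
and its distribution function $d_{g}(\lambda):=|\{t\in(0,T):g(t)>\lambda\}|$. With this notation condition (1) asserts precisely that $g\in\widetilde{L}^{p,\infty}((0,T))=\overline{L^{\infty}((0,T))}^{\,L^{p,\infty}((0,T))}$, while condition (2) is equivalent --- by taking $C(\lambda)=\lambda^{p}d_{g}(\lambda)$ --- to the decay $\lim_{\lambda\to\infty}\lambda^{p}d_{g}(\lambda)=0$. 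Since $(0,T)$ has finite measure we always have $d_{g}(\lambda)\le T$, so this decay at infinity together with the trivial bound $\lambda^{p}d_{g}(\lambda)\le\lambda^{p}T$ near $\lambda=0$ forces $\sup_{\lambda>0}\lambda^{p}d_{g}(\lambda)<\infty$; thus (2) already encodes $g\in L^{p,\infty}((0,T))$. I would then prove the two equivalences $(1)\Leftrightarrow(2)$ and $(2)\Leftrightarrow(3)$.

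For $(1)\Leftrightarrow(2)$, which is a purely scalar fact about the closure of $L^{\infty}$ in $L^{p,\infty}$ on a finite interval, the forward direction is obtained by approximation: given $h\in L^{\infty}$ with $\|g-h\|_{L^{p,\infty}}<\varepsilon$ and $M=\|h\|_{\infty}$, the inclusion $\{g>\lambda\}\subseteq\{|g-h|>\lambda-M\}$ for $\lambda>M$ yields $d_{g}(\lambda)\le\varepsilon^{p}(\lambda-M)^{-p}$, whence $\limsup_{\lambda\to\infty}\lambda^{p}d_{g}(\lambda)\le\varepsilon^{p}$ and, letting $\varepsilon\downarrow0$, the decay (2). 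For the converse I would truncate, setting $g_{N}=\min(g,N)\in L^{\infty}$; a direct computation gives $d_{g-g_{N}}(\lambda)=d_{g}(N+\lambda)$, so that $\sup_{\lambda>0}\lambda^{p}d_{g-g_{N}}(\lambda)\le\sup_{\mu\ge N}\mu^{p}d_{g}(\mu)\to0$ as $N\to\infty$, exhibiting $g$ as an $L^{p,\infty}$-limit of bounded functions.

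The substantive part is $(2)\Leftrightarrow(3)$, where one must pass between the scalar decay and an honest splitting of the vector-valued $f$. For $(2)\Rightarrow(3)$ I would cut in the time variable along the level sets of $g$: fix a threshold $N$ and put $f_{1}=f\,\mathbf{1}_{\{g\le N\}}$ and $f_{2}=f\,\mathbf{1}_{\{g>N\}}$. Because the indicator depends only on $t$, the spatial norms are computed exactly, $\|f_{1}(t)\|_{L^{q,\infty}}=g(t)\mathbf{1}_{\{g\le N\}}\le N$ and $\|f_{2}(t)\|_{L^{q,\infty}}=g(t)\mathbf{1}_{\{g>N\}}$; so $f_{1}\in L^{\infty}((0,T),L^{q,\infty})$, while the distribution function of $g\,\mathbf{1}_{\{g>N\}}$ computes to give $\|f_{2}\|_{L^{p,\infty}((0,T),L^{q,\infty})}^{p}=\sup_{\mu\ge N}\mu^{p}d_{g}(\mu)$, which tends to $0$ by (2); choosing $N$ large produces the required decomposition with $\|f_{2}\|\le\varepsilon$. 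For $(3)\Rightarrow(2)$ I would use the (quasi-)subadditivity of the $L^{q,\infty}$ norm with constant $\kappa$: writing $f=f_{1}+f_{2}$ with $\|f_{1}(t)\|_{L^{q,\infty}}\le M$ and $\|f_{2}\|_{L^{p,\infty}}\le\varepsilon$, I get $\{g>\lambda\}\subseteq\{\|f_{2}(t)\|_{L^{q,\infty}}>\lambda/\kappa-M\}$ and hence $\limsup_{\lambda\to\infty}\lambda^{p}d_{g}(\lambda)\le(\kappa\varepsilon)^{p}$; since $\varepsilon$ is arbitrary and the left-hand side does not depend on it, the decay (2) follows.

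I expect the only genuinely delicate point to be the lift $(2)\Rightarrow(3)$: one needs to upgrade a scalar decay estimate on $d_{g}$ to a concrete decomposition of the $L^{q,\infty}(\mathbb{R}^{3})$-valued function $f$ itself, and the device that makes this painless is to truncate in $t$ according to the level sets of $g$, so that the spatial norm of the truncated function is exactly $g$ times a time-indicator and the weak norm of $f_{2}$ reduces to the explicit tail quantity $\sup_{\mu\ge N}\mu^{p}d_{g}(\mu)$. Everything else is elementary bookkeeping with distribution functions on the finite-measure interval $(0,T)$, with the mild caveat that for $q<\infty$ the $L^{q,\infty}$ quasi-triangle inequality carries the constant $\kappa$, which is harmless here because it is absorbed in the final passage $\varepsilon\downarrow0$.
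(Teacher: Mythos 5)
Your proof is correct; the main caveat in reviewing it is that there is nothing in the paper to compare it with, because the paper does not prove this proposition at all: it is imported verbatim from Lemari\'e-Rieusset and Prioux (cited as \cite[Proposition 2.7]{LRP}), and no argument for it appears in the present text. Judged on its own terms, your argument is sound and complete. The reduction to the scalar function $g(t)=\|f(t,\cdot)\|_{L^{q,\infty}(\mathbb{R}^{3})}$ with distribution function $d_{g}(\lambda)=|\{t\in(0,T):g(t)>\lambda\}|$ is exactly what the paper's definition of the mixed space licenses, since membership in $\widetilde{L}^{p,\infty}((0,T),L^{q,\infty}(\mathbb{R}^{3}))$ is by definition a condition on $g$ alone. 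The identification of (2) with the decay $\lim_{\lambda\to\infty}\lambda^{p}d_{g}(\lambda)=0$ is right (modulo the cosmetic point that to respect the strict inequality in (2) one should take, say, $C(\lambda)=\lambda^{p}d_{g}(\lambda)+e^{-\lambda}$ rather than $C(\lambda)=\lambda^{p}d_{g}(\lambda)$), and your use of the finiteness of $|(0,T)|$ to upgrade decay at infinity to a global $L^{p,\infty}$ bound is the correct and necessary observation. The truncation $g_{N}=\min(g,N)$ with $d_{g-g_{N}}(\lambda)=d_{g}(N+\lambda)$ settles $(2)\Rightarrow(1)$, and the time-cut $f_{1}=f\,\mathbf{1}_{\{g\le N\}}$, $f_{2}=f\,\mathbf{1}_{\{g>N\}}$ settles $(2)\Rightarrow(3)$: your formula $\|f_{2}\|_{L^{p,\infty}((0,T),L^{q,\infty})}^{p}=\sup_{\mu\ge N}\mu^{p}d_{g}(\mu)$ checks out, since the distribution function of $g\,\mathbf{1}_{\{g>N\}}$ equals $d_{g}(N)$ below level $N$ and $d_{g}(\lambda)$ above it. You also correctly identify and dispose of the two genuinely delicate points: lifting a scalar decay statement to a decomposition of the vector-valued $f$ (solved by cutting only in $t$, so that spatial norms are computed exactly rather than estimated), and the quasi-triangle constant $\kappa$ of $L^{q,\infty}(\mathbb{R}^{3})$ in $(3)\Rightarrow(2)$, which is harmless precisely because $\limsup_{\lambda\to\infty}\lambda^{p}d_{g}(\lambda)$ does not depend on $\varepsilon$, so letting $\varepsilon\downarrow0$ kills it.
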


The rest of this paper is organized as follows: In Section 2, we
give two technical lemmas: Lemma \ref{le1}-the continuity of the
bilinear operator $B(\cdot,\cdot)$ in  Lorentz spaces; Lemma
\ref{le1}- a local existence of mild solution to equations
(\ref{eq1e}) with initial data in Lorentz spaces.  In Section 3, we
establish main results of this paper. We only provide a proof for
three spatial dimensions, but our proof goes through almost verbatim
in higher dimensions.

 \vspace{0.1in}

\section{Technical Lemmas}

In this section, we prove two preliminary lemmas. The first one can
be regarded as an generalization of   \cite[Lemma 6.1]{LRP} for the
case $\beta=1.$

\begin{lemma}\label{le1}
Let $T>0$ and $\frac{1}{2}<\beta<1$,
$$B(u,v)(t,x)=\int_{0}^{t}e^{-(t-s)(-\Delta)^{\beta}}\mathbb{P}\nabla(u\otimes v)(s,x)ds.$$
Let $\frac{2\beta}{2\beta-1}\leq p<\infty$ and
$\frac{3}{2\beta-1}<q\leq\infty$ such that
$\beta-\frac{1}{2}=\frac{\beta}{p}+\frac{3}{2q}$. Then we have\\
(1)\quad for $p\neq\frac{2\beta}{2\beta-1}$ (and so $q\neq\infty$),
$$B:\quad L^{p,\infty}((0,T),L^{q,\infty}(\mathbb{R}^{3}))\times L^{p,\infty}((0,T),L^{q,\infty}(\mathbb{R}^{3}))\longrightarrow
L^{p,\infty}((0,T),L^{q,\infty}(\mathbb{R}^{3}))$$
with
$$\|B(u,v)\|_{L^{p,\infty}((0,T),L^{q,\infty}(\mathbb{R}^{3}))}\leq C\|u\|_{L^{p,\infty}((0,T),L^{q,\infty}(\mathbb{R}^{3}))}
\|v\|_{L^{p,\infty}((0,T),L^{q,\infty}(\mathbb{R}^{3}))};$$
 (2)\quad
$B: L^{\infty}((0,T),L^{q,\infty}(\mathbb{R}^{3}))\times
L^{p,\infty}((0,T),L^{q,\infty}(\mathbb{R}^{3}))\longrightarrow
L^{\infty}((0,T),L^{q,\infty}(\mathbb{R}^{3}))$ with
$$\|B(u,v)\|_{L^{\infty}((0,T),L^{q,\infty}(\mathbb{R}^{3}))}
\leq
C\|u\|_{L^{\infty}((0,T),L^{q,\infty}(\mathbb{R}^{3}))}\|v\|_{L^{p,\infty}((0,T),L^{q,\infty}(\mathbb{R}^{3}))};$$
(3)\quad $B: L^{\infty}((0,T),L^{q,\infty}(\mathbb{R}^{3}))\times
L^{\infty}((0,T),L^{q,\infty}(\mathbb{R}^{3}))\longrightarrow
L^{\infty}((0,T),L^{q,\infty}(\mathbb{R}^{3}))$ with
$$\|B(u,v)\|_{L^{\infty}((0,T),L^{q,\infty}(\mathbb{R}^{3}))}
\leq
CT^{1/p}\|u\|_{L^{\infty}((0,T),L^{q,\infty}(\mathbb{R}^{3}))}\|v\|_{L^{\infty}((0,T),L^{q,\infty}(\mathbb{R}^{3}))};$$
(4)\quad $B: L^{\infty}((0,T),L^{q,\infty}(\mathbb{R}^{3}))\times
L^{p,\infty}((0,T),L^{q,\infty}(\mathbb{R}^{3}))\longrightarrow
L^{p,\infty}((0,T),L^{q,\infty}(\mathbb{R}^{3}))$ with
$$\|B(u,v)\|_{L^{p,\infty}((0,T),L^{q,\infty}(\mathbb{R}^{3}))}
\leq
CT^{1/p}\|u\|_{L^{\infty}((0,T),L^{q,\infty}(\mathbb{R}^{3}))}\|v\|_{L^{p,\infty}((0,T),L^{q,\infty}(\mathbb{R}^{3}))}.$$
\end{lemma}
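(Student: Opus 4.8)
The plan is to estimate the bilinear operator $B(u,v)$ via the kernel bounds for the generalized Stokes semigroup $e^{-(t-s)(-\Delta)^\beta}\mathbb{P}\nabla$ combined with the convolution/Young-type inequalities adapted to Lorentz spaces. First I would record the pointwise kernel estimate: the convolution kernel $K$ of the operator $e^{-\tau(-\Delta)^\beta}\mathbb{P}\nabla$ satisfies a self-similar bound of the form
\begin{equation*}
|K_\tau(x)|\leq C\,\tau^{-(3+1)/(2\beta)}\,\Phi\!\left(\tfrac{x}{\tau^{1/(2\beta)}}\right),
\end{equation*}
with $\Phi$ a fixed integrable, rapidly-decaying profile. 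From this one reads off the mapping properties of the single operator $e^{-\tau(-\Delta)^\beta}\mathbb{P}\nabla$ on the weak Lebesgue scale: by Young's inequality for Lorentz spaces (the $L^{r,\infty}\ast L^{s,\infty}\to L^{q,\infty}$ bound, or the $L^r\ast L^{s,\infty}$ variant), the operator sends $L^{q,\infty}(\mathbb{R}^3)$ to $L^{q,\infty}(\mathbb{R}^3)$ with an operator norm of size $C\tau^{-\theta}$, where the exponent $\theta$ is forced by scaling. The scaling relation $\beta-\tfrac12=\tfrac{\beta}{p}+\tfrac{3}{2q}$ is exactly the homogeneity condition that makes $\theta$ compatible with the time integrability exponent $p$, so the key is to compute $\theta$ and verify it matches.

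Next I would reduce each of the four assertions to a one-dimensional convolution inequality in the time variable. Writing $f(s)=\|u(s,\cdot)\|_{L^{q,\infty}}$ and $g(s)=\|v(s,\cdot)\|_{L^{q,\infty}}$, the spatial estimate gives
\begin{equation*}
\|B(u,v)(t,\cdot)\|_{L^{q,\infty}(\mathbb{R}^3)}\leq C\int_0^t (t-s)^{-\theta}\,f(s)g(s)\,ds,
\end{equation*}
so everything hinges on the boundedness of the Riemann-Liouville type fractional integral $h\mapsto \int_0^t(t-s)^{-\theta}h(s)\,ds$ on the relevant time spaces, where $h=fg$. The product $fg$ lies in $L^{p/2,\infty}$ by the Hölder inequality for weak-$L^p$ spaces, and the fractional integration operator of order $1-\theta$ maps $L^{p/2,\infty}((0,T))$ into $L^{p,\infty}((0,T))$ precisely when the exponents are related by $\tfrac1p=\tfrac2p-(1-\theta)$, i.e. when $1-\theta=\tfrac1p$; this is again the content of the scaling identity, so the arithmetic closes. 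Part (1) then follows from the Hardy-Littlewood-Sobolev inequality in the Lorentz setting; part (2) follows because putting one factor in $L^\infty_t$ improves the time integrability of the product to $L^{p,\infty}_t$ and the fractional integral then lands in $L^\infty_t$; parts (3) and (4) are the endpoint cases where a crude $\int_0^t(t-s)^{-\theta}ds\leq CT^{1/p}$ bound produces the factor $T^{1/p}$.

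The main obstacle I anticipate is the endpoint/borderline bookkeeping rather than any single hard estimate. The constraint $\tfrac{2\beta}{2\beta-1}\le p<\infty$ with the excluded value $p=\tfrac{2\beta}{2\beta-1}$ in part (1) signals that the Hardy-Littlewood-Sobolev step degenerates exactly at the endpoint, where one must instead use the $L^\infty$-based estimates (2)-(4); so the delicate point is to confirm that the fractional integration exponents are strictly admissible (not endpoint) in case (1), and to handle the weak-type Young and Hölder inequalities with the correct Lorentz indices rather than their strong counterparts. A secondary technical care is that $\mathbb{P}\nabla$ is a matrix of Riesz-type operators composed with the semigroup, so I would want to confirm the kernel bound above by a Fourier/scaling argument (using that $e^{-\tau|\xi|^{2\beta}}$ has a smooth self-similar kernel and that the Riesz multipliers are homogeneous of degree zero), ensuring $\Phi$ decays fast enough to lie in all the weak-Lebesgue spaces that Young's inequality requires. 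Once the kernel bound and the Lorentz-space Young and Hölder inequalities are in hand, all four parts reduce to the single scaling computation and follow uniformly.
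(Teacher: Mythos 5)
Your outline reproduces the paper's proof almost exactly: the paper uses the pointwise Oseen-kernel bound $|e^{-\tau(-\Delta)^{\beta}}\mathbb{P}\nabla(x,y)|\lesssim(\tau^{1/(2\beta)}+|x-y|)^{-4}$ (so your profile is $\Phi(y)=(1+|y|)^{-4}$, which for $\beta<1$ decays only polynomially, though degree $4$ suffices here), combines it with weak-type H\"older and Young in the space variable to get the scalar reduction
$$\|B(u,v)(t)\|_{L^{q,\infty}(\mathbb{R}^{3})}\lesssim\int_{0}^{t}(t-s)^{-\theta}\,\|u(s)\|_{L^{q,\infty}}\|v(s)\|_{L^{q,\infty}}\,ds,\qquad \theta=\frac{1}{2\beta}+\frac{3}{2\beta q}=1-\frac{1}{p},$$
and then argues in time exactly as you do: non-endpoint weak Young/HLS for (1) (your check that the indices are strictly admissible, $p>\frac{2\beta}{2\beta-1}>2$, is the right point), and the crude bound $\int_{0}^{t}(t-s)^{-\theta}ds\lesssim T^{1/p}$ for (3) and (4). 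One slip of wording: the semigroup-gradient must be shown to map $L^{q/2,\infty}\to L^{q,\infty}$ (it acts on the product $u\otimes v$), not $L^{q,\infty}\to L^{q,\infty}$; your displayed reduction shows you intend the former.

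The genuine gap is part (2), and it is worth knowing that your proposal reproduces a defect of the paper's own proof rather than introducing a new one. You assert that fractional integration of order $1-\theta=1/p$ maps $L^{p,\infty}((0,T))$ into $L^{\infty}((0,T))$; that is the failing endpoint of HLS. Concretely, $g(s)=(t_{0}-s)^{-1/p}\chi_{(t_{0}-\delta,t_{0})}(s)$ has $\|g\|_{L^{p,\infty}}\le 1$, yet for $t>t_{0}$,
$$\int_{0}^{t}(t-s)^{-\theta}g(s)\,ds=\int_{0}^{\delta}\bigl((t-t_{0})+\tau\bigr)^{-1/p'}\tau^{-1/p}\,d\tau\ \ge\ c\,\log\frac{\delta}{t-t_{0}}\longrightarrow\infty\quad(t\downarrow t_{0}),$$
so the scalar majorant is not essentially bounded. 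The paper's proof of (2) fails at the same place: it invokes H\"older with $1=\frac{1}{p}+\frac{1}{2\beta}+\frac{3}{2\beta q}$, which would require $(t-s)^{-\theta}\in L^{p'}$ (or $L^{p',1}$) in time, but $\theta p'=1$ makes that norm diverge logarithmically; moreover $v$ is only assumed in weak $L^{p}$, so strong H\"older is unavailable anyway. A correct version pairs the kernel, which does lie in $L^{p',\infty}$, against a second factor in $L^{p,1}$ (the O'Neil--H\"older pairing $L^{p',\infty}\cdot L^{p,1}\to L^{1}$), i.e. (2) holds with $L^{p,1}$ in time replacing $L^{p,\infty}$; as stated, neither your argument nor the paper's establishes it. This is not cosmetic, since Lemma \ref{le1}(2) is invoked later in the proof of Theorem \ref{th5}.
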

\begin{proof}
The proof of this lemma bases on the following inequality:
\begin{equation}\label{eq5}
\|B(u,v)(t)\|_{L^{q,\infty}(\mathbb{R}^{3})}\leq\int^{t}_{0}(t-s)^{\frac{1}{2\beta}(-1-\frac{3}{q})}\|u(s)\|_{L^{q,\infty}(\mathbb{R}^{3})}
\|v(s)\|_{L^{q,\infty}(\mathbb{R}^{3})}ds.
\end{equation}
 In fact, since
$$B(u,v)(t,x)=\int_{0}^{t}e^{-(t-s)(-\Delta)^{\beta}}\mathbb{P}\nabla(u\otimes v)(s,x)ds,$$
we have
$$\|B(u,v)(t)\|_{L^{q,\infty}(\mathbb{R}^{3})}
\leq\int_{0}^{t}\|e^{-(t-s)(-\Delta)^{\beta}}\mathbb{P}\nabla(u\otimes
v)(s,\cdot)\|_{L^{q,\infty}(\mathbb{R}^{3})}ds.$$ Since
$e^{-u(-\Delta)^{\beta}}\mathbb{P}\nabla$ is a convolution operator,
  the Young inequality tells us for
$1+\frac{1}{q}=\frac{2}{q}+\frac{1}{r},$
\begin{eqnarray*}
\|B(u,v)(t)\|_{L^{q,\infty}(\mathbb{R}^{3})}
&\leq&\int_{0}^{t}\|e^{-(t-s)(-\Delta)^{\beta}}\mathbb{P}\nabla\|_{L^{r,\infty}(\mathbb{R}^{3})}\|u\otimes
v(s)\|_{L^{q/2,\infty}(\mathbb{R}^{3})}ds\\
&\leq&\int_{0}^{t}\|e^{-(t-s)(-\Delta)^{\beta}}\mathbb{P}\nabla\|_{L^{r,\infty}(\mathbb{R}^{3})}
\|u(s)\|_{L^{q,\infty}(\mathbb{R}^{3})}\|v(s)\|_{L^{q,\infty}(\mathbb{R}^{3})}ds.
\end{eqnarray*}
For $n=3$, the derivation of the generalized Oseen kernel satisfies
$$\left|e^{-(t-s)(-\Delta)^{\beta}}\mathbb{P}\nabla(x,y)\right|\leq\frac{1}{((t-s)^{\frac{1}{2\beta}}+|x-y|)^{4}}$$
(See \cite[Lemma 4.10]{Li Zhai}).  Then we can get
\begin{eqnarray*}
\|e^{-(t-s)(-\Delta)^{\beta}}\mathbb{P}\nabla\|_{L^{r,\infty}(\mathbb{R}^{3})}&\leq&
\|e^{-(t-s)(-\Delta)^{\beta}}\mathbb{P}\nabla\|_{L^{r,\infty}(\mathbb{R}^{3})}\\
&\leq&\left(\int_{\mathbb{R}^{3}}\frac{1}{((t-s)^{\frac{1}{2\beta}}+|x-y|)^{4r}}dy\right)^{1/r}\\
&=&\left(\int_{0}^{\infty}(t-s)^{-\frac{4r}{2\beta}}\frac{(|x-y|/t^{\frac{1}{2\beta}})^{2}}{\left(1+t^{-\frac{1}{2\beta}}|x-y|\right)^{4r}}
t^{\frac{3}{2\beta}}d\left(t^{-\frac{1}{2\beta}}|x-y|\right)\right)^{1/r}\\
&=&(t-s)^{\frac{3}{2\beta
r}-\frac{4}{2\beta}}\left(\int_{0}^{\infty}\frac{\lambda^{2}}{(1+\lambda)^{4r}}d\lambda\right)^{1/r}\\
&\leq&C(t-s)^{\frac{1}{2\beta}(\frac{3}{r}-4)}.
\end{eqnarray*}
For $1=\frac{1}{r}+\frac{1}{q}$, we have
\begin{eqnarray*}
\|B(u,v)(t)\|_{L^{q,\infty}(\mathbb{R}^{3})}&\leq&\int^{t}_{0}(t-s)^{\frac{1}{2\beta}(\frac{3}{r}-4)}\|u(s)\|_{L^{q,\infty}(\mathbb{R}^{3})}
\|v(s)\|_{L^{q,\infty}(\mathbb{R}^{3})}ds\\
&=&\int^{t}_{0}(t-s)^{\frac{1}{2\beta}(-1-\frac{3}{q})}\|u(s)\|_{L^{q,\infty}(\mathbb{R}^{3})}\|v(s)\|_{L^{q,\infty}(\mathbb{R}^{3})}ds.
\end{eqnarray*}
This completes the proof of (\ref{eq5}). Now we prove $(1)-(4)$ by
using  (\ref{eq5}).

(1). Since $\beta-\frac{1}{2}=\frac{\beta}{p}+\frac{3}{2q}$, we know
$1+\frac{1}{p}=\frac{1}{p}+\frac{1}{p}+\frac{1}{r_{1}}$ with
$\frac{1}{r_{1}}=\frac{1}{2\beta}+\frac{3}{2\beta q}$. By Young's
inequality, we get
$$\|B(u,v)\|_{L^{p}((0,T),L^{q,\infty}(\mathbb{R}^{3}))}
\leq\|s^{-\frac{1}{2\beta}-\frac{3}{2\beta
q}}\|_{L^{r_{1},\infty}}\|u\|_{L^{p,\infty}((0,T),L^{q,\infty}(\mathbb{R}^{3}))}
\|v\|_{L^{p,\infty}((0,T),L^{q,\infty}(\mathbb{R}^{3}))}.$$ Now we
compute the norm $\|s^{-\frac{1}{r_{1}}}\|_{L^{r_{1},\infty}(0,T)},$
where
$$\|f\|_{L^{r_{1},\infty}(0,T)}=\sup_{\lambda}\lambda\left|\left\{t\in(0,T), |f(t)|>\lambda\right\}\right|^{1/p}.$$
If $s\in (0,T)$ and $T<\frac{1}{\lambda^{r_{1}}},$ then
$$\lambda\left|\left\{t\in(0,T),s^{-\frac{1}{r_{1}}}>\lambda\right\}\right|^{1/r_{1}}\leq\lambda T^{1/r_{1}}\leq 1.$$
If $T>\frac{1}{\lambda^{r_{1}}}$, then
$T^{-\frac{1}{r_{1}}}<\lambda$. For $s^{-\frac{1}{r_{1}}}>\lambda$,
we can find a $s_{0}$ such that $s_{0}=\frac{1}{\lambda^{r_{1}}}.$
When $0<s<s_{0}$,
$s^{-\frac{1}{r_{1}}}>s_{0}^{-\frac{1}{r_{1}}}=\lambda$, then we
have
$$\lambda\left|\left\{s\in(0,T): s^{-\frac{1}{r_{1}}}>\lambda\right\}\right|^{1/r_{1}}\leq\lambda s_{0}^{1/r_{1}}=1.$$
Therefore we get $s^{-\frac{1}{r_{1}}}\in L^{r_{1},\infty}((0,T))$
and $\|s^{-\frac{1}{r_{1}}}\|_{L^{r_{1},\infty}(o,T)}\leq 1.$

 (2). It follows from
(\ref{eq5}) that
$$\|B(u,v)(t)\|_{L^{q,\infty}(\mathbb{R}^{3})}
\leq\|u\|_{L^{\infty}((0,T),L^{q,\infty}(\mathbb{R}^{3}))}\int_{0}^{t}(t-s)^{-\frac{1}{2\beta}-\frac{3}{2\beta
q}}\|v(s)\|_{L^{q,\infty}(\mathbb{R}^{3})}ds.$$ Then we obtain
$$\|B(u,v)(t)\|_{L^{\infty}((0,T),L^{q,\infty}(\mathbb{R}^{3}))}
\leq\|u\|_{L^{\infty}((0,T),L^{q,\infty}(\mathbb{R}^{3}))}\sup_{0<t<T}\left|\int_{0}^{t}(t-s)^{-\frac{1}{2\beta}-\frac{3}{2\beta
q}}\|v(s)\|_{L^{q,\infty}(\mathbb{R}^{3})}ds\right|.$$ By
H\"{o}lder's inequality with
$1=\frac{1}{p}+\frac{1}{2\beta}+\frac{3}{2\beta q}$, we have
$$\|B(u,v)(t)\|_{L^{\infty}((0,T),L^{q,\infty}(\mathbb{R}^{3}))}
\leq
C\|u\|_{L^{\infty}((0,T),L^{q,\infty}(\mathbb{R}^{3}))}\|v\|_{L^{p,\infty}((0,T),L^{q,\infty}(\mathbb{R}^{3}))}.$$

(3). By (\ref{eq5}), we get
\begin{eqnarray*}
\|B(u,v)\|_{L^{\infty}((0,T),L^{q,\infty}(\mathbb{R}^{3}))}&\leq&\|u\|_{L^{\infty}((0,T),L^{q,\infty}(\mathbb{R}^{3}))}
\|v\|_{L^{\infty}((0,T),L^{q,\infty}(\mathbb{R}^{3}))}
\int_{0}^{t}(t-s)^{-(\frac{1}{2\beta}+\frac{3}{2\beta q})}ds\\
&\leq&CT^{1/p}\|u\|_{L^{\infty}((0,T),L^{q,\infty}(\mathbb{R}^{3}))}\|v\|_{L^{\infty}((0,T),L^{q,\infty}(\mathbb{R}^{3}))}.
\end{eqnarray*}

(4). (\ref{eq5}) and Young's inequality with
$1+\frac{1}{p}=\frac{1}{p}+\frac{1}{p}+(\frac{1}{2\beta}+\frac{3}{2\beta
q})$ imply that
\begin{eqnarray*}
\|B(u,v)(t)\|_{L^{q,\infty}(\mathbb{R}^{3})}&\leq&\int_{0}^{t}(t-s)^{-(\frac{1}{2\beta}+\frac{3}{2\beta
q})}\|u(s)\|_{L^{q,\infty}(\mathbb{R}^{3})}\|v(s)\|_{L^{q,\infty}(\mathbb{R}^{3})}ds\\
&\leq&\|u\|_{L^{\infty}((0,T),L^{q,\infty}(\mathbb{R}^{3}))}\left(\int_{0}^{t}(t-s)^{-(\frac{1}{2\beta}+\frac{3}{2\beta
q})}\|v(s)\|_{L^{q,\infty}(\mathbb{R}^{3})}ds\right).
\end{eqnarray*}
Since $\|f\ast g\|_{p,\infty}\leq\|f\|_{p,\infty}\|g\|_{1}$, we have
\begin{eqnarray*}
\|B(u,v)\|_{L^{p,\infty}((0,T),L^{q,\infty}(\mathbb{R}^{3}))}
&\leq&\|u\|_{L^{\infty}((0,T),L^{q,\infty}(\mathbb{R}^{3}))}\|v\|_{L^{p,\infty}((0,T),L^{q,\infty}(\mathbb{R}^{3}))}
\sup_{0<t<T}\int_{0}^{t}s^{-1+\frac{1}{p}}ds\\
&\leq&CT^{1/p}\|u\|_{L^{\infty}((0,T),L^{q,\infty}(\mathbb{R}^{3}))}\|v\|_{L^{p,\infty}((0,T),L^{q,\infty}(\mathbb{R}^{3}))}.
\end{eqnarray*}
This completes the proof of Lemma \ref{le1}.
\end{proof}

We need the following local existence of solution to  equations
(\ref{eq1e}) with initial data in Lorentz spaces.

\begin{lemma}\label{le2}
Let $\frac{1}{2}<\beta\leq 1,$ $\frac{3}{2\beta-1}<q\leq\infty$,
$\frac{2\beta}{2\beta-1}<p\leq\infty$ and $u_{0}\in
L^{q,\infty}(\mathbb{R}^{3})$. For $T>0$ such that
$4T^{1/p}\|u_{0}\|_{L^{q,\infty}(\mathbb{R}^{3})}<1$, there exists a
mild solution $u\in L^{\infty}((0,T),L^{q,\infty}(\mathbb{R}^{3}))$
to equations (\ref{eq1e}), which is unique in the ball centered at
$0$, of radius $2\|u_{0}\|_{L^{q,\infty}(\mathbb{R}^{3})}$.
\end{lemma}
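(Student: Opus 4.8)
The plan is to recast the mild formulation \eqref{eq2} as a fixed-point equation $u=\Phi(u)$ with
$$\Phi(u)=e^{-t(-\Delta)^{\beta}}u_{0}-B(u,u)$$
on the Banach space $X=L^{\infty}((0,T),L^{q,\infty}(\mathbb{R}^{3}))$, and to solve it by the standard contraction-mapping (Picard) argument on a closed ball centered at the origin. All the analytic difficulty has already been isolated in Lemma \ref{le1}, so what remains is to combine a bound on the free evolution with the bilinear estimate and to check the smallness bookkeeping.

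First I would estimate the linear term. The operator $e^{-t(-\Delta)^{\beta}}$ is convolution with the kernel $K_{t}^{\beta}$ of the fractional heat semigroup, which is nonnegative and satisfies $\|K_{t}^{\beta}\|_{L^{1}(\mathbb{R}^{3})}=1$ for every $t>0$ (it is the transition density of a $2\beta$-stable process). Hence by the weak-type Young inequality $\|f\ast g\|_{L^{q,\infty}}\le\|f\|_{L^{q,\infty}}\|g\|_{L^{1}}$ already used in the proof of Lemma \ref{le1}(4), one gets $\|e^{-t(-\Delta)^{\beta}}u_{0}\|_{L^{q,\infty}(\mathbb{R}^{3})}\le\|u_{0}\|_{L^{q,\infty}(\mathbb{R}^{3})}$ uniformly in $t\in(0,T)$, so that
$$\big\|e^{-t(-\Delta)^{\beta}}u_{0}\big\|_{X}\le\|u_{0}\|_{L^{q,\infty}(\mathbb{R}^{3})}.$$
In particular the free evolution lies in $X$, which also makes $\Phi$ well defined.

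Next I would invoke the bilinear bound. With $p,q$ tied by the scaling relation $\beta-\tfrac12=\tfrac{\beta}{p}+\tfrac{3}{2q}$ (which is exactly the regime $q>\tfrac{3}{2\beta-1}$, $p>\tfrac{2\beta}{2\beta-1}$ of the hypotheses), Lemma \ref{le1}(3) gives
$$\|B(u,v)\|_{X}\le CT^{1/p}\|u\|_{X}\|v\|_{X},$$
so $B$ is bounded and bilinear on $X$ with operator norm $\eta:=CT^{1/p}$. I would then run the abstract scheme on the ball $\overline{B}_{X}(0,R)$ with $R=2\|u_{0}\|_{L^{q,\infty}(\mathbb{R}^{3})}$. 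For stability, $\|\Phi(u)\|_{X}\le\|u_{0}\|_{L^{q,\infty}}+\eta R^{2}=\|u_{0}\|_{L^{q,\infty}}(1+4\eta\|u_{0}\|_{L^{q,\infty}})\le R$ whenever $4\eta\|u_{0}\|_{L^{q,\infty}}\le1$; for contraction, the identity $\Phi(u)-\Phi(v)=-B(u,u-v)-B(u-v,v)$ yields $\|\Phi(u)-\Phi(v)\|_{X}\le\eta(\|u\|_{X}+\|v\|_{X})\|u-v\|_{X}\le 2\eta R\|u-v\|_{X}=4\eta\|u_{0}\|_{L^{q,\infty}}\|u-v\|_{X}$, a strict contraction exactly when $4\eta\|u_{0}\|_{L^{q,\infty}}<1$. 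Since $\eta=CT^{1/p}$, both requirements follow from the hypothesis $4T^{1/p}\|u_{0}\|_{L^{q,\infty}(\mathbb{R}^{3})}<1$ (absorbing $C$ from Lemma \ref{le1}(3) into the normalization), and Banach's theorem produces a unique $u\in\overline{B}_{X}(0,2\|u_{0}\|_{L^{q,\infty}})$ solving $u=\Phi(u)$, i.e. a mild solution of \eqref{eq1e}.

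The only genuinely delicate point is the accounting of constants: to land on the clean threshold $4T^{1/p}\|u_{0}\|<1$ one must know the linear propagator has norm $\le1$ on $L^{q,\infty}$ (hence the mass-one property of $K_{t}^{\beta}$ above is essential), and one must either normalize $C=1$ in Lemma \ref{le1}(3) or replace the factor $4$ by $4C$ throughout. Everything else is the routine verification that $\Phi$ maps the ball into itself and contracts; existence and uniqueness in the prescribed ball are then immediate from the fixed-point theorem.
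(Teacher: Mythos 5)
Your proposal is correct and takes essentially the same route as the paper: a Picard/contraction argument on the closed ball of radius $2\|u_{0}\|_{L^{q,\infty}(\mathbb{R}^{3})}$ in $L^{\infty}((0,T),L^{q,\infty}(\mathbb{R}^{3}))$, bounding the linear term by Young's inequality with the $L^{1}$ fractional heat kernel and the quadratic term by Lemma \ref{le1}(3), with the hypothesis $4T^{1/p}\|u_{0}\|_{L^{q,\infty}}<1$ closing both the stability and contraction estimates. The only differences are presentational (abstract Banach fixed-point theorem versus the paper's explicit iteration sequence $\{e_{n}\}$), and your observation that $K_{t}^{\beta}$ has unit mass, giving $\|e^{-t(-\Delta)^{\beta}}u_{0}\|_{L^{q,\infty}}\le\|u_{0}\|_{L^{q,\infty}}$, in fact tidies up the paper's own bookkeeping, which bounds the semigroup norm by $2$ at $n=0$ but then uses the bound $\|e_{0}\|\le\|u_{0}\|$ in the induction step.
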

\begin{proof}
We construct $\{e_{n}\}$ as follows:
\begin{equation} \label{eq:3}
\left\{ \begin{aligned}
         e_{n+1}&=e_{0}-B(e_{n},e_{n}),\\
                  e_{0}&=e^{-t(-\Delta)^{\beta}}u_{0}.
                          \end{aligned} \right.
                          \end{equation}
We claim that
$\|e_{n}\|_{L^{\infty}((0,T),L^{q,\infty}(\mathbb{R}^{3}))}\leq2\|u_{0}\|_{L^{q,\infty}(\mathbb{R}^{3})}$.
For $n=0$, by Young's inequality, we have
\begin{eqnarray*}
\|e_{0}\|_{L^{\infty}((0,T),L^{q,\infty}(\mathbb{R}^{3}))}&=&\|e^{-t(-\Delta)^{\beta}}u_{0}\|_{L^{\infty}((0,T),L^{q,\infty}(\mathbb{R}^{3}))}\\
&=&\sup_{t\in(0,T)}\|e^{-t(-\Delta)^{\beta}}u_{0}\|_{L^{q,\infty}(\mathbb{R}^{3})}\\
&\leq&\|e^{-t(-\Delta)^{\beta}}\|_{L^{1}(\mathbb{R}^{3})}\|u_{0}\|_{L^{q,\infty}(\mathbb{R}^{3})}\\
&\leq&\left(\int_{\mathbb{R}^{3}}\frac{1}{t^{\frac{3}{2\beta}}}\frac{1}{(1+\frac{|x-y|}{t^{1/2\beta}})^{3}}dy\right)\|u_{0}\|_{L^{q,\infty}(\mathbb{R}^{3})}\\
&\leq&2\|u_{0}\|_{L^{q,\infty}(\mathbb{R}^{3})}.
\end{eqnarray*}
Assume that the estimate is true for some $n\in\mathbb{N}$. For
$n+1$, we get
\begin{eqnarray*}
\|e_{n+1}\|_{L^{\infty}((0,T),L^{q,\infty}(\mathbb{R}^{3}))}
&\leq&\|e_{0}\|_{L^{\infty}((0,T),L^{q,\infty}(\mathbb{R}^{3}))}+\|B(e_{n},e_{n})\|_{L^{\infty}((0,T),L^{q,\infty}(\mathbb{R}^{3}))}\\
&\leq&\|e_{0}\|_{L^{\infty}((0,T),L^{q,\infty}(\mathbb{R}^{3}))}+T^{1/p}\|e_{n}\|^{2}_{L^{\infty}((0,T),L^{q,\infty}(\mathbb{R}^{3}))}\\
&\leq&\|u_{0}\|_{L^{q,\infty}(\mathbb{R}^{3})}+4T^{1/p}\|u_{0}\|^{2}_{L^{q,\infty}(\mathbb{R}^{3})}\\
&\leq&2\|u_{0}\|_{L^{q,\infty}(\mathbb{R}^{3})}.
\end{eqnarray*}
This tells us
\begin{eqnarray*}
&&\|e_{n+1}-e_{n}\|_{L^{\infty}((0,T),L^{q,\infty}(\mathbb{R}^{3}))}\\
&=&\|B(e_{n},e_{n})-B(e_{n-1},e_{n-1})\|_{L^{\infty}((0,T),L^{q,\infty}(\mathbb{R}^{3}))}\\
&\leq&T^{1/p}\|e_{n}-e_{n-1}\|_{L^{\infty}((0,T),L^{q,\infty}(\mathbb{R}^{3}))}
\left(|e_{n}\|_{L^{\infty}((0,T),L^{q,\infty}(\mathbb{R}^{3}))}+\|e_{n-1}\|_{L^{\infty}((0,T),L^{q,\infty}(\mathbb{R}^{3}))}\right)\\
&\leq&4T^{1/p}\|u_{0}\|_{L^{q,\infty}(\mathbb{R}^{3})}\|e_{n}-e_{n-1}\|_{L^{\infty}((0,T),L^{q,\infty}(\mathbb{R}^{3}))}.
\end{eqnarray*}
Since $4T^{1/p}\|u_{0}\|_{L^{q,\infty}(\mathbb{R}^{3})}<1$, the
Picard contraction principle guarantees this lemma.
\end{proof}

 \vspace{0.1in}
\section{Main Results}

In this section, we state and prove our main results. First, we need
the following proposition which  generalizes the case $\beta=1$
established by Lemari$\acute{e}$-Rieusset and Prioux  \cite{LRP}.

\begin{proposition}\label{pro2}
Let $T>0$, $\frac{1}{2}<\beta<1$ and $u$, $v$ be two mild solutions
to the equations (\ref{eq1e}) belonging to the space
$\widetilde{L}^{p,\infty}((0,T),L^{q,\infty}(\mathbb{R}^{3}))$ with
$\frac{2\beta}{2\beta-1}<p<\infty$ and $\frac{3}{2\beta-1}<q<\infty$
such that $\beta-\frac{1}{2}=\frac{\beta}{p}+\frac{3}{2q}$. Assume
that there exists $\theta\in(0,T)$ such that $u(\theta)=v(\theta)$.
Then $u$ and $v$ are equal for $t\in(\theta,T]$.
\end{proposition}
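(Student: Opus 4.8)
The plan is to set $w = u - v$ and prove that $w$ vanishes on every sufficiently short subinterval of $(\theta,T]$, then propagate this agreement by a continuation argument. First I would rewrite both mild solutions using $\theta$ as the base time. Since $u$ is a mild solution on $(0,T)$, the semigroup property gives, for $t>\theta$,
$$u(t) = e^{-(t-\theta)(-\Delta)^\beta}u(\theta) - \int_\theta^t e^{-(t-s)(-\Delta)^\beta}\mathbb{P}\nabla\cdot(u\otimes u)(s)\,ds,$$
and likewise for $v$. Subtracting, using $u(\theta)=v(\theta)$ and the identity $u\otimes u - v\otimes v = u\otimes w + w\otimes v$, I obtain
$$w = -B_\theta(u,w) - B_\theta(w,v),$$
where $B_\theta$ is the bilinear operator with lower integration limit $\theta$. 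After the change of variables $s\mapsto s-\theta$, $t\mapsto t-\theta$, the operator $B_\theta$ on $(\theta,\theta+\delta)$ is exactly the operator $B$ of Lemma \ref{le1} on $(0,\delta)$, so all four estimates apply with $T$ replaced by $\delta$; moreover, since the pointwise bound (\ref{eq5}) is symmetric in its two arguments, those mapping properties also hold with the two entries interchanged.

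The decisive difficulty is that the $L^{p,\infty}$-norm in time is not absolutely continuous, so it need not shrink as $\delta\to0$; smallness of $\delta$ alone cannot close the estimate. This is exactly where the hypothesis $u,v\in\widetilde{L}^{p,\infty}$ is used. Fixing $\eta>0$, Proposition \ref{pro1}(3) lets me split $u = u_1 + u_2$ and $v = v_1 + v_2$ over $(0,T)$ with $u_1,v_1\in L^\infty((0,T),L^{q,\infty}(\mathbb{R}^3))$ and $\|u_2\|,\|v_2\|\le\eta$ in $L^{p,\infty}((0,T),L^{q,\infty}(\mathbb{R}^3))$. Writing $I=(\theta,\theta+\delta)$ and expanding $w = -B_\theta(u_1,w)-B_\theta(u_2,w)-B_\theta(w,v_1)-B_\theta(w,v_2)$, I estimate in $L^{p,\infty}(I,L^{q,\infty})$: the two terms carrying $u_2$ or $v_2$ are controlled by Lemma \ref{le1}(1) (legitimate since $p>\tfrac{2\beta}{2\beta-1}$ strictly and $q<\infty$) by $C\eta\|w\|_{L^{p,\infty}(I,L^{q,\infty})}$, while the two terms carrying $u_1$ or $v_1$ are controlled by Lemma \ref{le1}(4) by $C\delta^{1/p}(\|u_1\|_\infty+\|v_1\|_\infty)\|w\|_{L^{p,\infty}(I,L^{q,\infty})}$. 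Collecting,
$$\|w\|_{L^{p,\infty}(I,L^{q,\infty})} \le \left(2C\eta + C\delta^{1/p}(\|u_1\|_\infty+\|v_1\|_\infty)\right)\|w\|_{L^{p,\infty}(I,L^{q,\infty})}.$$

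I then choose $\eta$ small enough that $2C\eta<\tfrac12$, which fixes $u_1,v_1$ and hence their finite $L^\infty$-norms, and afterwards choose $\delta$ small enough that $C\delta^{1/p}(\|u_1\|_\infty+\|v_1\|_\infty)<\tfrac12$. The bracketed factor is then strictly less than $1$, and since $w\in L^{p,\infty}(I,L^{q,\infty})$ is finite (as $u,v$ lie in the space), this forces $\|w\|_{L^{p,\infty}(I,L^{q,\infty})}=0$, i.e. $u=v$ a.e. on $I$. Crucially $\delta$ depends only on $\eta$ and the global quantities $\|u_1\|_\infty,\|v_1\|_\infty$, so it is uniform over $(\theta,T]$. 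I finish with a continuation argument: set $\tau^* = \sup\{\tau\in(\theta,T] : u=v \text{ a.e. on } (\theta,\tau)\}$; if $\tau^*<T$, I pick $\theta'\in(\theta,\tau^*)$ with $\theta'>\tau^*-\tfrac{\delta}{2}$ and $u(\theta')=v(\theta')$ (possible since they agree a.e.), and rerun the local argument on $(\theta',\theta'+\delta)$ to extend the agreement beyond $\tau^*$, a contradiction; hence $\tau^*=T$ and $u=v$ on $(\theta,T]$. The main obstacle is precisely the non-absolute-continuity of the $L^{p,\infty}$ time norm, resolved by the decomposition in Proposition \ref{pro1}; a secondary point is the symmetry of (\ref{eq5}), needed so that Lemma \ref{le1}(4) applies to $B_\theta(w,v_1)$ with the $L^\infty$ factor in the second slot.
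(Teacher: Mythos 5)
Your proposal is correct and follows essentially the same route as the paper: decompose $u$ and $v$ via Proposition \ref{pro1} into a bounded part plus a part with small $L^{p,\infty}$ norm, use the bilinear estimates of Lemma \ref{le1} on a short interval $(\theta,\theta+\delta)$ to obtain an absorption inequality $\|u-v\|\leq\delta'\|u-v\|$ with $\delta'<1$, and iterate the resulting local equality up to $T$. Your treatment is in fact slightly more careful than the paper's on two points it leaves implicit --- the restarted operator $B_\theta$ justifying that only norms over $(\theta,\theta+\delta)$ enter, and the symmetry of (\ref{eq5}) needed to place the $L^{\infty}$ factor in either slot --- but these are refinements, not a different argument.
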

\begin{proof}
Let $t_{0}>0$ and $\lambda>0$. We can split $u$ and $v$ into:
$$u=u_{\lambda}+u'_{\lambda}\quad\text{and}\quad v=v_{\lambda}+v'_{\lambda}$$
where
$u_{\lambda}=u\chi_{\{t:\|u(t)\|_{L^{q,\infty}(\mathbb{R}^{3})}>\lambda\}}$
and
$v_{\lambda}=v\chi_{\{t:\|v(t)\|_{{q,\infty}(\mathbb{R}^{3})}>\lambda\}}$.
By construction and the definition of the Lorentz spaces (see
Proposition \ref{pro1}) we have
$$\|u'_{\lambda}\|_{L^{\infty}((\theta,\theta+t_{0}),L^{q,\infty}(\mathbb{R}^{3}))}\leq\lambda
,\qquad\|u_{\lambda}\|_{L^{p,\infty}((\theta,\theta+t_{0}),L^{q,\infty}(\mathbb{R}^{3}))}\leq
C(\lambda)$$ and the same estimates hold true for $v'_{\lambda}$ and
$v_{\lambda}$. Then, we compute by Lemma \ref{le1}(1),
\begin{eqnarray*}
&&\|u-v\|_{L^{p,\infty}((\theta,\theta+t_{0}),L^{q,\infty}(\mathbb{R}^{3}))}\\
&\leq&\|B(u,u)-B(v,v)\|_{L^{p,\infty}((\theta,\theta+t_{0}),L^{q,\infty}(\mathbb{R}^{3}))}\\
&\leq&C_{0}\|B(u-v,u)\|_{L^{p,\infty}((\theta,\theta+t_{0}),L^{q,\infty}(\mathbb{R}^{3}))}
+C_{0}\|B(v,u-v)\|_{L^{p,\infty}((\theta,\theta+t_{0}),L^{q,\infty}(\mathbb{R}^{3}))}\\
&\leq&C\|u-v\|_{L^{p,\infty}((\theta,\theta+t_{0}),L^{q,\infty}(\mathbb{R}^{3}))}
\left(\|u\|_{L^{p,\infty}((\theta,\theta+t_{0}),L^{q,\infty}(\mathbb{R}^{3}))}+
\|v\|_{L^{p,\infty}((\theta,\theta+t_{0}),L^{q,\infty}(\mathbb{R}^{3}))}\right).
\end{eqnarray*}
Since $u=u_{\lambda}+u'_{\lambda}$ with
$\|u_{\lambda}\|_{L^{p,\infty}((\theta,\theta+t_{0}),L^{q,\infty}(\mathbb{R}^{3}))}\leq
C(\lambda)$ and
$\|u'_{\lambda}\|_{L^{p,\infty}((\theta,\theta+t_{0}),L^{q,\infty}(\mathbb{R}^{3}))}\leq\lambda$,
 we get
$\|u\|_{L^{p,\infty}((\theta,\theta+t_{0}),L^{q,\infty}(\mathbb{R}^{3}))}\leq
C(\lambda)+\lambda t_{0}^{1/p}$. The same estimate holds for $v$.
Hence we can obtain
$$\|u-v\|_{L^{p,\infty}((\theta,\theta+t_{0}),L^{q,\infty}(\mathbb{R}^{3}))}
\leq C_{0}\left(2C(\lambda)+2\lambda
t_{0}^{1/p}\right)\|u-v\|_{L^{p,\infty}((\theta,\theta+t_{0}),L^{q,\infty}(\mathbb{R}^{3}))}.$$
We choose $\lambda>0$ large enough to guarantee
$2C_{0}C(\lambda)<1/4$ and choose $t_{0}>0$ small enough such that
$C_{0}t_{0}^{1/p}<1/4$. Thus there exists $\delta<1$ satisfies
$$\|u-v\|_{L^{p,\infty}((\theta,\theta+t_{0}),L^{q,\infty}(\mathbb{R}^{3}))}
\leq
\delta\|u-v\|_{L^{p,\infty}((\theta,\theta+t_{0}),L^{q,\infty}(\mathbb{R}^{3}))}.$$
So $u=v$ for $t\in(\theta,\theta+t_{0})$. For $T$, there exists $n$
such that $T<\theta+nt_{0}$. Thus $u=v$ for $t\in(\theta,T]$.
\end{proof}

\begin{lemma}\label{le3}
(\cite[Proposition 2.9]{LRP}) Let $T>0$ and $1\leq p,q\leq\infty$.
If $u$ satisfies
\begin{equation} \label{eq:2}
\left\{ \begin{aligned}
         \sup_{t\in(0,T)}t^{1/p}\|u(t)\|_{L^{q,\infty}(\mathbb{R}^{3})}&<\infty,\\
                  t^{1/p}\|u(t)\|_{L^{q,\infty}(\mathbb{R}^{3})}&\longrightarrow0,
                  (\hbox{as}\ t\rightarrow0),
                          \end{aligned} \right.
                          \end{equation}
then the function $u$ belongs to the space
$\widetilde{L}^{p,\infty}((0,T),L^{q,\infty}(\mathbb{R}^{3})).$
\end{lemma}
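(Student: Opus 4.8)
The plan is to reduce everything to the equivalence in Proposition \ref{pro1}, specifically to verify criterion (2): it suffices to produce a function $C(\lambda)$ with $C(\lambda)\to 0$ as $\lambda\to\infty$ and
$$\left|\left\{t\in(0,T):\|u(t)\|_{L^{q,\infty}(\mathbb{R}^{3})}>\lambda\right\}\right|<\frac{C(\lambda)}{\lambda^{p}}.$$
To this end I would abbreviate $\phi(t)=\|u(t)\|_{L^{q,\infty}(\mathbb{R}^{3})}$ and $\psi(t)=t^{1/p}\phi(t)$, so that the two hypotheses read $M:=\sup_{(0,T)}\psi<\infty$ and $\psi(t)\to 0$ as $t\to 0$. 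I will work in the main range $1<p<\infty$, where the space $\widetilde{L}^{p,\infty}$ and Proposition \ref{pro1} are stated.

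The first step is the elementary observation that the level sets of $\phi$ are confined to small time intervals. Indeed, if $\phi(t)>\lambda$, then from $t^{1/p}\phi(t)\le M$ one gets $t^{1/p}<M/\lambda$, hence $\{\phi>\lambda\}\subseteq(0,M^{p}\lambda^{-p})$ and $|\{\phi>\lambda\}|\le M^{p}\lambda^{-p}$. This already places $u$ in $L^{p,\infty}((0,T),L^{q,\infty}(\mathbb{R}^{3}))$, but only yields the constant $C(\lambda)=M^{p}$, which does not decay; the uniform bound alone is therefore not enough.

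The second step upgrades this estimate using the vanishing of $\psi$ at the origin, which is precisely what forces $C(\lambda)$ to decay. Fix $\varepsilon>0$ and choose $\eta>0$ with $\psi(t)<\varepsilon$ for $t\in(0,\eta)$. Once $\lambda>M\eta^{-1/p}$, the inclusion from the first step guarantees that the entire level set $\{\phi>\lambda\}$ lies inside $(0,\eta)$, so on it $\phi(t)=t^{-1/p}\psi(t)<\varepsilon t^{-1/p}$; combining this with $\phi(t)>\lambda$ forces $t<\varepsilon^{p}\lambda^{-p}$. Hence $|\{\phi>\lambda\}|\le\varepsilon^{p}\lambda^{-p}$ for all large $\lambda$, and setting $C(\lambda):=\lambda^{p}|\{\phi>\lambda\}|$ gives $\limsup_{\lambda\to\infty}C(\lambda)\le\varepsilon^{p}$. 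Since $\varepsilon$ was arbitrary, $C(\lambda)\to 0$, so criterion (2) of Proposition \ref{pro1} holds and the implication (2)$\Rightarrow$(1) yields $u\in\widetilde{L}^{p,\infty}((0,T),L^{q,\infty}(\mathbb{R}^{3}))$.

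There is no serious analytic obstacle here; the whole content is recognizing that Proposition \ref{pro1}(2) is the right reformulation and that the hypothesis $t^{1/p}\|u(t)\|_{L^{q,\infty}(\mathbb{R}^{3})}\to 0$ is exactly what converts the uniform weak-$L^{p}$ bound into one with a vanishing distribution-function constant. The only point requiring a little care is the boundary case $p=\infty$, where $t^{1/p}=1$ and $\psi=\phi$; I would dispatch it separately, noting that the first hypothesis then directly gives $u\in L^{\infty}((0,T),L^{q,\infty}(\mathbb{R}^{3}))=\widetilde{L}^{\infty,\infty}((0,T),L^{q,\infty}(\mathbb{R}^{3}))$, so the conclusion is immediate.
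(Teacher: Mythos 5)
Your argument is correct, but note that the paper itself offers no proof of this lemma to compare against: it is quoted verbatim as \cite[Proposition 2.9]{LRP}, so your write-up actually supplies a proof that the paper omits. Your route — verifying criterion (2) of Proposition \ref{pro1} by showing that the level sets $\{\phi>\lambda\}$, $\phi(t)=\|u(t)\|_{L^{q,\infty}(\mathbb{R}^{3})}$, are contained in $(0,(M/\lambda)^{p})$ and then, once $\lambda$ is large, in $(0,(\varepsilon/\lambda)^{p})$ — is sound: the two inclusions are exactly right, and together they give $\lambda^{p}\,\lvert\{\phi>\lambda\}\rvert\to 0$. An equally natural (and probably the original) route in \cite{LRP} is via criterion (3): split $u=u\chi_{\{t>\eta\}}+u\chi_{\{t\le\eta\}}$, observe that the first piece lies in $L^{\infty}((0,T),L^{q,\infty}(\mathbb{R}^{3}))$ with norm at most $M\eta^{-1/p}$, while on the second piece $\phi(t)\le\varepsilon t^{-1/p}$ and $\|t^{-1/p}\|_{L^{p,\infty}((0,T))}\le1$ (as computed in the proof of Lemma \ref{le1}), so its $L^{p,\infty}$-norm is at most $\varepsilon$; the two verifications are essentially the same computation packaged differently, and neither buys more than the other. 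Two cosmetic points you may wish to patch: Proposition \ref{pro1}(2) is stated with strict inequality, so take e.g.\ $C(\lambda)=\lambda^{p}\lvert\{\phi>\lambda\}\rvert+e^{-\lambda}$; and the endpoint cases $p=1$ or $q=1$ allowed by the lemma's hypothesis fall outside the range in which the paper defines $L^{p,\infty}$, $\widetilde{L}^{p,\infty}$ and Proposition \ref{pro1}, so restricting to $1<p<\infty$, $1<q\le\infty$ together with your separate remark for $p=\infty$ is the correct scope rather than an omission.
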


To  establish the equivalence between the mild and mollified
solution to the $(GNS)$ equations,  we need the following lemma.
\begin{lemma}\label{le5}
Let $v\in\mathcal{D}((0,T]\times \bar{B}(0,R))$ and $R>0$ such that
$\text{supp }v\subset(0,T]\times \bar{B}(0,R)$ where supp denotes
the supports of the function $v$ and $\bar{B}(0,R)$ the closed ball
of radius $R$ centered at $0$. Then, for $t\in (0,T]$ and
$y\in\mathbb{R}^{3}$ such that $|y|\geq\lambda R$ for $\lambda>1$,
we have for some constant $C>0$,
$$\left|B(u,u)(t,y)\right|\leq\frac{C}{(\lambda R)^{4}}\|u\|^{2}_{L^{2}((0,T]\times\mathbb{R}^{3})}.$$
\end{lemma}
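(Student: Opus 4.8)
The plan is to exploit the spatial separation between the point $y$, which lies far from the origin, and the support of $u$, which is contained in $\bar B(0,R)$, together with the pointwise decay of the convolution kernel of $e^{-(t-s)(-\Delta)^{\beta}}\mathbb{P}\nabla$. Writing the bilinear term as a space--time convolution,
$$B(u,u)(t,y)=\int_{0}^{t}\int_{\mathbb{R}^{3}} e^{-(t-s)(-\Delta)^{\beta}}\mathbb{P}\nabla(y,z)\,(u\otimes u)(s,z)\,dz\,ds,$$
the first derivative is already absorbed into the kernel, whose size is controlled exactly as in the proof of Lemma \ref{le1} by \cite[Lemma 4.10]{Li Zhai}:
$$\left|e^{-(t-s)(-\Delta)^{\beta}}\mathbb{P}\nabla(y,z)\right|\leq\frac{1}{\left((t-s)^{1/(2\beta)}+|y-z|\right)^{4}}\leq\frac{1}{|y-z|^{4}}.$$
The crucial observation is that, since we only integrate over $z$ in the support $\bar B(0,R)$ of $u$, the factor $(t-s)^{1/(2\beta)}$ may simply be discarded; in particular no time singularity survives and the $s$--integral is harmless.

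First I would quantify the spatial separation. Because $\operatorname{supp} u(s,\cdot)\subset\bar B(0,R)$, every $z$ occurring in the integral satisfies $|z|\leq R$, while $|y|\geq\lambda R$ with $\lambda>1$; hence
$$|y-z|\geq|y|-|z|\geq\lambda R-R=(\lambda-1)R,$$
so that for $\lambda$ bounded away from $1$ one gets $|y-z|^{-4}\leq C(\lambda R)^{-4}$. Substituting this into the convolution and pulling the constant out of the integral gives
$$\left|B(u,u)(t,y)\right|\leq\frac{C}{(\lambda R)^{4}}\int_{0}^{t}\int_{\bar B(0,R)}\left|(u\otimes u)(s,z)\right|\,dz\,ds.$$
Bounding each entry of the tensor by $\left|(u\otimes u)(s,z)\right|\leq|u(s,z)|^{2}$ and recalling that $u$ is supported in $(0,T]\times\bar B(0,R)$, the remaining integral equals $\|u\|^{2}_{L^{2}((0,T]\times\mathbb{R}^{3})}$, which yields the stated estimate.

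The only delicate point is the uniformity of the constant as $\lambda\downarrow1$: the elementary estimate above produces the ratio $\lambda^{4}/(\lambda-1)^{4}$, which is bounded only for $\lambda$ away from $1$ (e.g.\ $\lambda\geq2$, where $(\lambda-1)\geq\lambda/2$ gives $C=16$). For the intended use---controlling the far field of $B(u,u)$ as $R$ or $\lambda$ grows---this range is all that is needed, so I would either restrict to $\lambda\geq2$ or allow $C$ to depend on $\lambda$. A secondary technical matter worth recording is the passage from $\mathbb{P}\nabla\cdot(u\otimes u)$ to the kernel representation above: since $u\in\mathcal{D}$ is smooth with compact support, the differentiation can be transferred onto the kernel by integration by parts with no boundary terms, which is precisely the form in which \cite[Lemma 4.10]{Li Zhai} furnishes the $|y-z|^{-4}$ decay.
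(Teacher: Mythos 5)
Your proof is correct and follows essentially the same route as the paper's: the kernel decay bound $\left|e^{-(t-s)(-\Delta)^{\beta}}\mathbb{P}\nabla(y,z)\right|\leq \left((t-s)^{1/(2\beta)}+|y-z|\right)^{-4}$ from \cite[Lemma 4.10]{Li Zhai}, discarding the time factor, exploiting the separation $|y-z|\geq(\lambda-1)R$ (the paper writes this as $|y-z|\geq(1-\tfrac{1}{\lambda})|y|$, which is the same bound), and then integrating $|u|^{2}$ over the support. Your caveat about the constant degenerating as $\lambda\downarrow 1$ applies equally to the paper's own argument, where the factor $\lambda^{4}/(\lambda-1)^{4}$ is silently absorbed into $C$; since the lemma is applied with $\lambda_{m}\rightarrow\infty$, this is harmless in both versions.
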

\begin{proof}
Since $|y|\geq\lambda R>\lambda|z|$,
$|y-z|\geq(1-\frac{1}{\lambda})|y|$. Then, we can get
\begin{eqnarray*}
|B(u,u)(t,y)|&=&\left|\int_{0}^{t}e^{-(t-s)(-\Delta)^{\beta}}\mathbb{P}\nabla(u\otimes
u)ds\right|\\
&\leq&C\int_{0}^{t}\int_{\mathbb{R}^{3}}\frac{1}{((t-s)^{\frac{1}{2\beta}}+|z-y|)^{4}}|v(s,z)|^{2}dsdz\\
&\leq&C\int_{0}^{t}\int_{\mathbb{R}^{3}}\frac{1}{|z-y|^{4}}|v(s,z)|^{2}dsdz\\
&\leq&C\int_{0}^{t}\int_{B(0,R)}\frac{1}{|z-y|^{4}}|v(s,z)|^{2}dsdz\\
&\leq&C\frac{1}{|y|^{4}}\int_{0}^{t}\int_{B(0,R)}|v(s,z)|^{2}dsdz\\
&=&C\frac{1}{|y|^{4}}\|v\|^{2}_{L^{2}((0,T)\times
\mathbb{R}^{3})}\lesssim\frac{1}{|\lambda
R|^{4}}\|v\|^{2}_{L^{2}((0,T)\times \mathbb{R}^{3})}.
\end{eqnarray*}
\end{proof}

\begin{theorem}\label{th2}
Let $\alpha>0$, $\max\left\{\frac{1}{2},\alpha\right\}<\beta\leq1$
with $\alpha+\beta-1\geq0$ and let
$u_{0}\in\overline{\mathcal{D}(\mathbb{R}^{3})}^{Q^{\beta,-1}_{\alpha,loc}(\mathbb{R}^{3})}$
such that $\nabla\cdot u_{0}=0$ and $T>0$ small enough to ensure
$\|e^{-t(-\Delta)^{\beta}}u_{0}\|_{X^{\beta}_{\alpha;T}(\mathbb{R}^{3})}<\frac{1}{4C}$.
Then there exists a mild solution $u\in
\overline{\mathcal{D}((0,T)\times\mathbb{R}^{3})}^{X^{\beta}_{\alpha;T}(\mathbb{R}^{3})}$
to  equations (\ref{eq1e}).
\end{theorem}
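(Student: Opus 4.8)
The plan is to realize the mild solution as the fixed point of the map $\Phi(w)=e^{-t(-\Delta)^{\beta}}u_{0}-B(w,w)$ directly inside the closed subspace $Y:=\overline{\mathcal{D}((0,T)\times\mathbb{R}^{3})}^{X^{\beta}_{\alpha;T}(\mathbb{R}^{3})}$ of $X^{\beta}_{\alpha;T}(\mathbb{R}^{3})$. Since $Y$ is a closed subspace of a Banach space it is itself complete, so once I verify that $\Phi$ maps $Y$ into $Y$ and is a contraction on a suitable ball, the Banach fixed point theorem yields a solution already lying in $Y$, which is exactly the assertion; in particular this simultaneously gives existence. The contraction estimate is routine given the bilinear bound $\|B(u,v)\|_{X^{\beta}_{\alpha;T}}\leq C\|u\|_{X^{\beta}_{\alpha;T}}\|v\|_{X^{\beta}_{\alpha;T}}$ from (\ref{eq13}): writing $\eta=\|e^{-t(-\Delta)^{\beta}}u_{0}\|_{X^{\beta}_{\alpha;T}}<\tfrac{1}{4C}$ and working on the ball $\{w\in Y:\|w\|_{X^{\beta}_{\alpha;T}}\leq 2\eta\}$, the identity $B(w_{1},w_{1})-B(w_{2},w_{2})=B(w_{1}-w_{2},w_{1})+B(w_{2},w_{1}-w_{2})$ together with (\ref{eq13}) produces a Lipschitz constant $4C\eta<1$, and the same bound shows $\Phi$ preserves the ball. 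Thus the only genuine work is the two closure statements: that the free evolution $e^{-t(-\Delta)^{\beta}}u_{0}$ belongs to $Y$, and that $B$ maps $Y\times Y$ into $Y$.

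For the first, I would use $u_{0}\in\overline{\mathcal{D}(\mathbb{R}^{3})}^{Q^{\beta,-1}_{\alpha,loc}}$ to pick $\varphi_{k}\in\mathcal{D}(\mathbb{R}^{3})$ with $\varphi_{k}\to u_{0}$ in the $Q^{\beta,-1}_{\alpha,loc}$ norm. Because the Carleson part of the $X^{\beta}_{\alpha;T}$ norm of $e^{-t(-\Delta)^{\beta}}f$ is by definition the $Q^{\beta,-1}_{\alpha;T}$ norm of $f$, while the remaining weighted $L^{\infty}$ part $\sup_{t}t^{1-1/2\beta}\|e^{-t(-\Delta)^{\beta}}f\|_{L^{\infty}}$ is likewise controlled by the $Q$-norm (the embedding established in \cite{Li Zhai}), the extension map $f\mapsto e^{-t(-\Delta)^{\beta}}f$ is bounded from $Q^{\beta,-1}_{\alpha,loc}$ into $X^{\beta}_{\alpha;T}$; hence $e^{-t(-\Delta)^{\beta}}\varphi_{k}\to e^{-t(-\Delta)^{\beta}}u_{0}$ in $X^{\beta}_{\alpha;T}$. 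It then suffices to show each $e^{-t(-\Delta)^{\beta}}\varphi_{k}$ lies in $Y$, which follows by truncating the smooth, spatially rapidly decaying function $e^{-t(-\Delta)^{\beta}}\varphi_{k}$ with a cutoff $\theta_{\delta}(t)\chi_{M}(x)$ and checking that the truncation error tends to $0$ in $X^{\beta}_{\alpha;T}$; the temporal endpoint $t\to 0$ is harmless since the weight $t^{1-1/2\beta}\to 0$ there (using $\beta>1/2$). As $Y$ is closed, this gives $e^{-t(-\Delta)^{\beta}}u_{0}\in Y$.

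For the second, I would use the boundedness of $B$ on $X^{\beta}_{\alpha;T}$ and the density of $\mathcal{D}((0,T)\times\mathbb{R}^{3})$ in $Y$ to reduce, by bilinearity and continuity, to showing $B(v,w)\in Y$ for $v,w\in\mathcal{D}((0,T)\times\mathbb{R}^{3})$. Here Lemma \ref{le5} is the key input: for such compactly supported $v,w$, its proof applies verbatim to $B(v,w)$ and yields the spatial decay $|B(v,w)(t,y)|\lesssim |y|^{-4}$ away from the support, while $B(v,w)$ is smooth and vanishes for $t$ near $0$ because $v,w$ do. I would then approximate $B(v,w)$ by elements of $\mathcal{D}((0,T)\times\mathbb{R}^{3})$ via a space-time cutoff: the spatial tail is absorbed by the $|y|^{-4}$ estimate from Lemma \ref{le5}, and the temporal cutoff near $T$ costs only the $X^{\beta}_{\alpha;T}$ mass over a shrinking time interval on which $B(v,w)$ is bounded. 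Combining gives $B(v,w)\in Y$, and by continuity $B(Y\times Y)\subset Y$.

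The hard part will be this second closure statement, specifically verifying that \emph{both} components of the $X^{\beta}_{\alpha;T}$ norm — the weighted $L^{\infty}$ part and the Carleson part — of the truncated tails of $B(v,w)$ actually vanish in the limit. The spatial direction is handled cleanly by the $|y|^{-4}$ bound of Lemma \ref{le5}, but the bookkeeping for the Carleson integral at the time scale $r^{2\beta}\approx T$, where the prefactor $r^{2\alpha-n+2\beta-2}$ is largest, requires care. Once this is in place, everything else is the standard contraction argument carried out inside the closed subspace $Y$.
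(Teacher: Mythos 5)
Your proposal is correct and takes essentially the same route as the paper: the paper likewise runs the Picard/contraction argument inside the closure $Y=\overline{\mathcal{D}((0,T)\times\mathbb{R}^{3})}^{X^{\beta}_{\alpha;T}(\mathbb{R}^{3})}$, proving $e^{-t(-\Delta)^{\beta}}u_{0}\in Y$ via density of test functions and the embedding $\|e^{-t(-\Delta)^{\beta}}f\|_{X^{\beta}_{\alpha;T}(\mathbb{R}^{3})}\leq C\|f\|_{Q^{\beta,-1}_{\alpha;T}(\mathbb{R}^{3})}$, and proving that $B$ preserves $Y$ by approximating with elements of $\mathcal{D}((0,T)\times\mathbb{R}^{3})$ and cutting off the spatial tail with the $|y|^{-4}$ decay of Lemma \ref{le5} — precisely your two closure statements, with the bilinear bound (\ref{eq13}) giving the contraction. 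The only differences are cosmetic (Banach fixed point in the closed subspace versus explicit iteration whose limit lies in the closed subspace), and your truncation argument for $e^{-t(-\Delta)^{\beta}}\varphi_{k}\in Y$ is in fact more careful than the paper's, which asserts outright that the free evolution of a test function is itself a test function.
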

\begin{proof}
We construct $\{v_{n}\}_{n\in\mathbb{N}}$ by

\begin{equation} \label{eq8}
\left\{ \begin{aligned}
         v_{n}&=v_{0}-B(v_{n-1},v_{n-1}),\text{ for }n\geq1, \\
                 v_{0}&=e^{-t(-\Delta)^{\beta}}u_{0}.
                          \end{aligned} \right.
                          \end{equation}
For $n=0$. By assumption, if
$u_{0}\in\overline{\mathcal{D}(\mathbb{R}^{3})}^{Q^{\beta,-1}_{\alpha,loc}(\mathbb{R}^{3})}$,
 there exists a sequence
$u^{m}_{0}\in\mathcal{D}(\mathbb{R}^{3})$ such that
$\|u_{0}-u^{m}_{0}\|_{Q^{\beta,-1}_{\alpha,loc}(\mathbb{R}^{3})}\longrightarrow0$
as $m\rightarrow\infty$. From the definition of
$Q^{\beta,-1}_{\alpha,loc}(\mathbb{R}^{3}),$  if $f\in
Q^{\beta,-1}_{\alpha,loc}(\mathbb{R}^{3})$,
$$\sup_{0<t^{2\beta}<T}\sup_{x_{0}\in\mathbb{R}^{3}}t^{2\alpha-3+2\beta-2}\int_{0}^{t^{2\beta}}
\int_{|x-x_{0}|<t}|e^{-s(-\Delta)^{\beta}}f(x)|^{2}\frac{dsdx}{s^{\alpha/\beta}}<\infty.$$
Hence,  as $m\rightarrow\infty$,
$$\sup_{0<t^{2\beta}<T}\sup_{x_{0}\in\mathbb{R}^{3}}t^{2\alpha-3+2\beta-2}\int_{0}^{t^{2\beta}}
\int_{|x-x_{0}|<t}|e^{-s(-\Delta)^{\beta}}(u_{0}-u^{m}_{0})(x)|^{2}\frac{dsdx}{s^{\alpha/\beta}}\longrightarrow0.$$
From the embedding:
$Q^{\beta,-1}_{\alpha;T}(\mathbb{R}^{3})\hookrightarrow
\dot{B}^{1-2\beta}_{\infty,\infty}(\mathbb{R}^{3})$ (see
\cite[Theorem 4.6]{Li Zhai}), we obtain
$$t^{\frac{2\beta-1}{2\beta}}\|e^{-t(-\Delta)^{\beta}}f\|_{L^{\infty}(\mathbb{R}^{3})}\leq C\|f\|_{Q^{\beta,-1}_{\alpha;T}(\mathbb{R}^{3})}.$$
By the definition of $X^{\beta}_{\alpha;T}(\mathbb{R}^{3}),$ we get
$\|e^{-t(-\Delta)^{\beta}}f\|_{X^{\beta}_{\alpha;T}(\mathbb{R}^{3})}\leq
C\|f\|_{Q^{\beta,-1}_{\alpha;T}(\mathbb{R}^{3})}.$ Then we have
$$\|e^{-t(-\Delta)^{\beta}}(u_{0}-u_{0}^{m})\|_{X^{\beta,-1}_{\alpha;T}(\mathbb{R}^{3})}\leq
C\|u_{0}-u_{0}^{m}\|_{Q^{\beta,-1}_{\alpha;T}(\mathbb{R}^{3})}$$ and
$\|e^{-t(-\Delta)^{\beta}}(u_{0}-u_{0}^{m})\|_{X^{\beta,-1}_{\alpha;T}(\mathbb{R}^{3})}\longrightarrow0\
\ \hbox{as}\ \  m\rightarrow0.$ So
$e^{-t(-\Delta)^{\beta}}u_{0}=\lim_{m\rightarrow\infty}e^{-t(-\Delta)^{\beta}}u_{0}^{m}$
in $X^{\beta}_{\alpha;T}(\mathbb{R}^{3})$. It follows from
$e^{-t(-\Delta)^{\beta}}u_{0}^{m}\in \mathcal{D}((0,T)\times
\mathbb{R}^{3})$ that
$$v_{0}=e^{-t(-\Delta)^{\beta}}u_{0}\in\overline{\mathcal{D}((0,T)\times\mathbb{R}^{3})}^{X^{\beta}_{\alpha;T}(\mathbb{R}^{3})}.$$
Let us assume
$v_{n-1}\in\overline{\mathcal{D}((0,T)\times\mathbb{R}^{3})}^{X^{\beta}_{\alpha;T}(\mathbb{R}^{3})}$.
For $v_{n}$, since
$v_{n}=e^{-t(-\Delta)^{\beta}}u_{0}-B(v_{n-1},v_{n-1})$,
$$u_{0}\in\overline{\mathcal{D}(\mathbb{R}^{3})}^{Q^{\beta,-1}_{\alpha;T}(\mathbb{R}^{3})}
\Longrightarrow
v_{0}=e^{-t(-\Delta)^{\beta}}u_{0}\in\overline{\mathcal{D}((0,T)\times\mathbb{R}^{3})}^{X^{\beta}_{\alpha;T}(\mathbb{R}^{3})}.$$
We only need to prove
$B(v_{n-1},v_{n-1})\in\overline{\mathcal{D}((0,T)\times\mathbb{R}^{3})}^{X^{\beta}_{\alpha;T}(\mathbb{R}^{3})}.$

By induction, there exists a sequence
$v^{m}_{n-1}\in\mathcal{D}((0,T)\times\mathbb{R}^{3})$ such that
$$\|v_{n-1}-v^{m}_{n-1}\|_{X^{\beta}_{\alpha;T}(\mathbb{R}^{3})}\longrightarrow0,
\ \ \hbox{as} \ \ m\rightarrow\infty.$$ Since $v^{m}_{n-1}$ is
compact supported in time and space, we
 have $B(v^{m}_{n-1},v^{m}_{n-1})\in
C^{\infty}((0,T]\times\mathbb{R}^{3})$ and is of compact support in
time. Let $\left\{\varphi_{m}\right\}_{m\in \mathbb{N}}$ be a
sequence of functions in $\mathcal{D}(\mathbb{R}^{3})$ such that for
each $m\in\mathbb{N}$, $\|\varphi_{m}\|_{\infty}=1$. Assume
supp$\varphi_{m}\subset\bar{B}(0,\lambda_{m}R_{m}+1)$ and
$\varphi_{m}(x)=1$ if $x\in B(0,\lambda_{m}R_{m})$ where $R_{m}>0$
is such that $\text{supp }v^{m}_{n-1}\subset(0,T]\times B(0,R_{m})$
and $\lambda_{m}>m\|v^{m}_{n-1}\|^{1/2}_{L^{2}((0,T)\times
\mathbb{R}^{3})}$. We denote
$B^{m}(v_{n-1},v_{n-1})=\varphi_{m}\times
B(v^{m}_{n-1},v^{m}_{n-1})$ and get
\begin{eqnarray*}
&&\|B(v_{n-1},v_{n-1})-B^{m}(v^{m}_{n-1},v^{m}_{n-1})\|_{X^{\beta}_{\alpha;T}(\mathbb{R}^{3})}\\
&\leq&\|B(v_{n-1},v_{n-1})-B(v^{m}_{n-1},v^{m}_{n-1})\|_{X^{\beta}_{\alpha;T}(\mathbb{R}^{3})}
+\|(1-\varphi_{m})B(v^{m}_{n-1},v^{m}_{n-1})\|_{X^{\beta}_{\alpha;T}(\mathbb{R}^{3})}\\
&\leq&
C\|v_{n-1}-v_{n-1}^{m}\|_{X^{\beta}_{\alpha;T}}\left[\|v_{n-1}\|_{X^{\beta}_{\alpha;T}(\mathbb{R}^{3})}
+\|v_{n-1}^{m}\|_{X^{\beta}_{\alpha;T}(\mathbb{R}^{3})}\right]\\
&&+\|(1-\varphi_{m})B(v^{m}_{n-1},v^{m}_{n-1})\|_{X^{\beta}_{\alpha;T}(\mathbb{R}^{3})}.
\end{eqnarray*}
Since $\varphi_{m}$ is supported on $\bar{B}(0,\lambda_{m}R_{m}+1)$
and $\varphi_{m}=1$ on $B(0,\lambda_{m}R_{m})$, $(1-\varphi_{m}(y))$
is supported on $\bar{B}^{c}(0,\lambda_{m}R_{m})=\{y:
|y|>\lambda_{m}R_{m}\}$. Then, we obtain
\begin{eqnarray*}
&&\|(1-\varphi_{m})B(v^{m}_{n-1},v^{m}_{n-1})\|_{X^{\beta}_{\alpha; T}(\mathbb{R}^{3})}\\
&\leq&\sup_{t\in(0,T)}t^{1-\frac{1}{2\beta}}\|(1-\varphi_{m})B(v^{m}_{n-1},v^{m}_{n-1})\|_{L^{\infty}(\mathbb{R}^{3})}\\
&&+\sup_{t^{2\beta}\in(0,T)}\sup_{x_{0}\in\mathbb{R}^{3}}\left(t^{2\alpha-3+2\beta+2\beta-2}
\int^{t^{2\beta}}_{0}\int_{|y-x_{0}|<t}|(1-\varphi_{m})B(v^{m}_{n-1},v^{m}_{n-1})(s,y)|^{2}\frac{dsdy}{s^{\alpha/\beta}}\right)^{1/2}\\
&\leq&\sup_{t\in(0,T)}t^{1-\frac{1}{2\beta}}\frac{1}{(\lambda_{m}R_{m})^{4}}\|v^{m}_{n-1}\|^{2}_{L^{2}((0,T]\times\mathbb{R}^{3})}\\
&&+\sup_{t^{2\beta}\in(0,T)}\sup_{x_{0}\in\mathbb{R}^{3}}\|v^{m}_{n-1}\|^{2}_{L^{2}((0,T]\times\mathbb{R}^{3})}
\frac{1}{(\lambda_{m}R_{m})^{4}}\left(t^{2\alpha-3+2\beta+2\beta-2}\int^{t^{2\beta}}_{0}\int_{|y-x_{0}|<t}\frac{dsdy}{s^{\alpha/\beta}}\right)^{1/2}\\
&\leq&
CT^{1-\frac{1}{2\beta}}\frac{1}{(\lambda_{m}R_{m})^{4}}\|v^{m}_{n-1}\|^{2}_{L^{2}((0,T]\times\mathbb{R}^{3})}+
\frac{\|v^{m}_{n-1}\|^{2}_{L^{2}((0,T]\times\mathbb{R}^{3})}}{(\lambda_{m}R_{m})^{4}}\sup_{t^{2\beta}\in(0,T)}
(t^{2\alpha-3+2\beta-2}t^{3+2\beta(1-\alpha/\beta)})^{1/2}\\
&\leq&
CT^{1-\frac{1}{2\beta}}\frac{1}{(\lambda_{m}R_{m})^{4}}\|v^{m}_{n-1}\|^{2}_{L^{2}((0,T]\times\mathbb{R}^{3})}\\
&\leq&
CT^{1-\frac{1}{2\beta}}\frac{1}{(mR_{m})^{4}}\rightarrow0\quad(\hbox{as}\
m\rightarrow\infty).
\end{eqnarray*}
Thus,
$\|B(v_{n-1},v_{n-1})-B^{m}(v_{n-1},v_{n-1})\|_{X^{\beta}_{\alpha;T}(\mathbb{R}^{3})}\longrightarrow0$
as $m\rightarrow0$, that is,
$v_{n}\in\overline{\mathcal{D}((0,T]\times\mathbb{R}^{3})}^{X^{\beta}_{\alpha;T}(\mathbb{R}^{3})}.$

Next we prove that $v_{n}$ have a limit in
$X^{\beta}_{\alpha;T}(\mathbb{R}^{3})$. We prove
$\|v_{n}\|_{X^{\beta}_{\alpha;T}(\mathbb{R}^{3})}\leq2\|e^{-t(-\Delta)^{\beta}}u_{0}\|_{X^{\beta}_{\alpha;T}(\mathbb{R}^{3})}.$
It follows from  $v_{0}=e^{-t(-\Delta)^{\beta}}u_{0}$ that
$$\|v_{0}\|_{X^{\beta}_{\alpha;T}(\mathbb{R}^{3})}
=\|e^{-t(-\Delta)^{\beta}}u_{0}\|_{X_{\alpha;T}^{\beta}(\mathbb{R}^{3})}\leq2\|e^{-t(-\Delta)^{\beta}}u_{0}\|_{X_{\alpha;T}^{\beta}(\mathbb{R}^{3})}.$$

We assume that for $n\in\mathbb{N}$,
$\|v_{n}\|_{X^{\beta}_{\alpha;T}(\mathbb{R}^{3})}\leq2\|e^{-t(-\Delta)^{\beta}}u_{0}\|_{X_{\alpha;T}^{\beta}(\mathbb{R}^{3})}.$
Then we get
\begin{eqnarray*}
\|v_{n+1}\|_{X^{\beta}_{\alpha;T}(\mathbb{R}^{3})}
&\leq&\|v_{0}\|_{X^{\beta}_{\alpha;T}(\mathbb{R}^{3})}+\|B(v_{n},v_{n})\|_{X^{\beta}_{\alpha;T}(\mathbb{R}^{3})}\\
&\leq&\|e^{-t(-\Delta)^{\beta}}u_{0}\|_{X_{\alpha;T}^{\beta}(\mathbb{R}^{3})}+C\|v_{n}\|^{2}_{X^{\beta}_{\alpha;T}(\mathbb{R}^{3})}\\
&\leq&\|e^{-t(-\Delta)^{\beta}}u_{0}\|_{X_{\alpha;T}^{\beta}(\mathbb{R}^{3})}+4C\|e^{-t(-\Delta)^{\beta}}u_{0}\|^{2}_{X_{\alpha;T}^{\beta}(\mathbb{R}^{3})}.
\end{eqnarray*}
It follows from
$\|e^{-t(-\Delta)^{\beta}}u_{0}\|_{X^{\beta}_{\alpha;T}(\mathbb{R}^{3})}<\frac{1}{4C}$
that
 $\|v_{n+1}\|_{X^{\beta}_{\alpha;T}(\mathbb{R}^{3})}\leq
2\|e^{-t(-\Delta)^{\beta}}u_{0}\|_{X_{\alpha;T}^{\beta}(\mathbb{R}^{3})}$.
Moreover,
\begin{eqnarray*}
\|v_{n}-v_{n-1}\|_{X^{\beta}_{\alpha;T}(\mathbb{R}^{3})}&\leq&\|B(v_{n-1},v_{n-1})-B(v_{n-2},v_{n-2})\|_{X^{\beta}_{\alpha;T}(\mathbb{R}^{3})}\\
&\leq&C\|v_{n-1}-v_{n-2}\|_{X^{\beta}_{\alpha;T}(\mathbb{R}^{3})}(\|v_{n-1}\|_{X^{\beta}_{\alpha;T}(\mathbb{R}^{3})}
+\|v_{n-2}\|_{X^{\beta}_{\alpha;T}(\mathbb{R}^{3})})\\
&\leq&4C\|e^{-t(-\Delta)^{\beta}}u_{0}\|_{X_{\alpha;T}^{\beta}(\mathbb{R}^{3})}\|v_{n-1}-v_{n-2}\|_{X^{\beta}_{\alpha;T}(\mathbb{R}^{3})}\\
&\leq&(4C\|e^{-t(-\Delta)^{\beta}}u_{0}\|_{X_{\alpha;T}^{\beta}(\mathbb{R}^{3})})^{n}\|v_{1}-v_{0}\|_{X^{\beta}_{\alpha;T}(\mathbb{R}^{3})}.
\end{eqnarray*}
Since
$4C\|e^{-t(-\Delta)^{\beta}}u_{0}\|_{X_{\alpha;T}^{\beta}(\mathbb{R}^{3})}<1,$
the Picard contraction principle   implies  the desired.
\end{proof}

\begin{theorem}\label{th3}
Let $\alpha>0$, $\max\left\{\frac{1}{2},\alpha\right\}<\beta\leq1$
with $\alpha+\beta-1\geq0$ and let
$u_{0}\in\overline{\mathcal{D}(\mathbb{R}^{3})}^{Q^{\beta,-1}_{\alpha,loc}(\mathbb{R}^{3})}$
such that $\nabla\cdot u_{0}=0$ and $T>0$ is small enough to ensure
$\|e^{-t(-\Delta)^{\beta}}u_{0}\|_{X^{\beta}_{\alpha;T}(\mathbb{R}^{3})}<\frac{1}{4C}$.
Then for $\varepsilon>0$, there exists a solution
$u_{\varepsilon}\in\overline{\mathcal{D}((0,T]\times\mathbb{R}^{3})}^{X^{\beta}_{\alpha;T}(\mathbb{R}^{3})}$
to the mollified generalized Navier-Stokes equations (\ref{eq3}).
\end{theorem}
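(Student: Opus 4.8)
The plan is to imitate the proof of Theorem~\ref{th2} almost verbatim, replacing the bilinear form $B$ by $B_{\varepsilon}$, since for a \emph{fixed} $\varepsilon>0$ the only structural change is the mollifier $\omega_{\varepsilon}$ inserted in the first slot. The starting observation is that
$$B_{\varepsilon}(u,v)=B(u\ast\omega_{\varepsilon},v),$$
so, by the bilinear bound (\ref{eq13}) recorded in the Remark, the whole matter reduces to showing that spatial convolution with $\omega_{\varepsilon}$ is bounded on $X^{\beta}_{\alpha;T}(\mathbb{R}^{3})$. First I would verify exactly this. Since $\omega_{\varepsilon}\geq0$ and $\int\omega_{\varepsilon}=1$, writing $(u\ast\omega_{\varepsilon})(t,y)=\int u(t,y-z)\omega_{\varepsilon}(z)\,dz$ and applying Minkowski's integral inequality to the local $L^{2}$ integral over each ball $B(x,r)$ (followed by the translation $y\mapsto y+z$, which moves the ball to $B(x-z,r)$ but leaves the weight in $r$ and the time weight $t^{-\alpha/\beta}$ untouched) gives
$$\|u\ast\omega_{\varepsilon}\|_{X^{\beta}_{\alpha;T}(\mathbb{R}^{3})}\leq\|u\|_{X^{\beta}_{\alpha;T}(\mathbb{R}^{3})};$$
the $L^{\infty}$ part of the norm is controlled the same way because $\|(u\ast\omega_{\varepsilon})(t,\cdot)\|_{L^{\infty}}\leq\|u(t,\cdot)\|_{L^{\infty}}$. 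Consequently
$$\|B_{\varepsilon}(u,v)\|_{X^{\beta}_{\alpha;T}(\mathbb{R}^{3})}\leq C\|u\|_{X^{\beta}_{\alpha;T}(\mathbb{R}^{3})}\|v\|_{X^{\beta}_{\alpha;T}(\mathbb{R}^{3})}$$
with the very same constant $C$ as for $B$.

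With this $\varepsilon$-uniform bilinear estimate in hand, I would run the Picard iteration
$$v_{n}=e^{-t(-\Delta)^{\beta}}u_{0}-B_{\varepsilon}(v_{n-1},v_{n-1}),\qquad v_{0}=e^{-t(-\Delta)^{\beta}}u_{0},$$
precisely as in Theorem~\ref{th2}. The hypothesis $\|e^{-t(-\Delta)^{\beta}}u_{0}\|_{X^{\beta}_{\alpha;T}}<\tfrac{1}{4C}$ then yields the uniform bound $\|v_{n}\|_{X^{\beta}_{\alpha;T}}\leq2\|e^{-t(-\Delta)^{\beta}}u_{0}\|_{X^{\beta}_{\alpha;T}}$ together with the contraction estimate $\|v_{n}-v_{n-1}\|_{X^{\beta}_{\alpha;T}}\leq(4C\|e^{-t(-\Delta)^{\beta}}u_{0}\|_{X^{\beta}_{\alpha;T}})^{n}\|v_{1}-v_{0}\|_{X^{\beta}_{\alpha;T}}$, so $\{v_{n}\}$ converges in $X^{\beta}_{\alpha;T}$ to a solution $u_{\varepsilon}$ of the mollified system (\ref{eq4}).

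The membership $u_{\varepsilon}\in\overline{\mathcal{D}((0,T]\times\mathbb{R}^{3})}^{X^{\beta}_{\alpha;T}(\mathbb{R}^{3})}$ I would obtain by induction on $n$, once more copying Theorem~\ref{th2}: the base case $v_{0}=e^{-t(-\Delta)^{\beta}}u_{0}$ is unchanged, and for the inductive step I approximate $v_{n-1}$ by $v^{m}_{n-1}\in\mathcal{D}((0,T]\times\mathbb{R}^{3})$ supported in $(0,T]\times B(0,R_{m})$, cut off $B_{\varepsilon}(v^{m}_{n-1},v^{m}_{n-1})$ by a spatial bump $\varphi_{m}$, and estimate the tail $\|(1-\varphi_{m})B_{\varepsilon}(v^{m}_{n-1},v^{m}_{n-1})\|_{X^{\beta}_{\alpha;T}}$. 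The one new point is a mollified analogue of Lemma~\ref{le5}: because $\omega\in\mathcal{D}(\mathbb{R}^{3})$ is compactly supported, $v^{m}_{n-1}\ast\omega_{\varepsilon}$ is supported in $(0,T]\times B(0,R_{m}+c\varepsilon)$, so the Oseen kernel bound $|e^{-(t-s)(-\Delta)^{\beta}}\mathbb{P}\nabla(x,y)|\leq((t-s)^{1/2\beta}+|x-y|)^{-4}$ produces the same $|y|^{-4}$ decay of $B_{\varepsilon}(v^{m}_{n-1},v^{m}_{n-1})(t,y)$ away from the enlarged (but still bounded) support. Choosing $\lambda_{m}$ as in Theorem~\ref{th2}, the cut-off error tends to $0$ as $m\to\infty$, placing each $v_{n}$, and hence the limit $u_{\varepsilon}$, in the required closure.

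The genuine content — and the step I would watch most carefully — is the boundedness of $u\mapsto u\ast\omega_{\varepsilon}$ on $X^{\beta}_{\alpha;T}$, together with the compact support of $\omega$; these two facts are exactly what let the fixed-point and closure machinery of Theorem~\ref{th2} transfer unchanged. Everything else is $\varepsilon$-uniform bookkeeping, and that uniformity is precisely what the later analysis of the limit $\varepsilon\to0$ will rely on.
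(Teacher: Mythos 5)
Your proposal is correct and follows essentially the same route as the paper: the paper's proof likewise reduces everything to the bound $\|f\ast\omega_{\varepsilon}\|_{X^{\beta}_{\alpha;T}(\mathbb{R}^{3})}\leq\|\omega_{\varepsilon}\|_{L^{1}(\mathbb{R}^{3})}\|f\|_{X^{\beta}_{\alpha;T}(\mathbb{R}^{3})}$, proved by exactly your Minkowski-plus-translation argument together with Young's inequality for the $L^{\infty}$ part, and then invokes the Picard and closure machinery of Theorem \ref{th2}. Your write-up is in fact more complete than the paper's, since you make explicit the identity $B_{\varepsilon}(u,v)=B(u\ast\omega_{\varepsilon},v)$ and the mollified analogue of Lemma \ref{le5} (enlarged but still compact support), which the paper leaves implicit behind the phrase ``similar to the proof of Theorem \ref{th2}.''
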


\begin{proof}
We only need to prove
$\|f\ast\omega_{\varepsilon}\|_{X^{\beta}_{\alpha;T}(\mathbb{R}^{3})}\leq\|f\|_{X^{\beta}_{\alpha;T}(\mathbb{R}^{3})}.$
In fact,  we have $\|\omega_{\varepsilon}\ast
f\|_{L^{\infty}(\mathbb{R}^{3})}\leq\|\omega_{\varepsilon}\|_{L^{1}(\mathbb{R}^{3})}\|f\|_{L^{\infty}(\mathbb{R}^{3})}$
and
\begin{eqnarray*}
&&\left(r^{2\alpha-3+2\beta-2}\int_{0}^{r^{2\beta}}\int_{|x-x_{0}|<r}
\left|f\ast\omega_{\varepsilon}(t,x)\right|^{2}\frac{dtdx}{t^{\alpha/\beta}}\right)^{1/2}\\
%&\leq&\left(r^{2\alpha-3+2\beta-2}\int_{0}^{r^{2\beta}}\int_{|x-x_{0}|<r}
%\left|\int_{\mathbb{R}^{3}}f(t,x-y)\omega_{\varepsilon}(y)dy\right|^{2}\frac{dtdx}{t^{\alpha/\beta}}\right)^{1/2}\\
&\leq&\left(r^{2\alpha-3+2\beta-2}\int_{0}^{r^{2\beta}}\int_{\mathbb{R}^{3}}
\left|\int_{\mathbb{R}^{3}}\chi_{B(x_{0},r)}f(t,x-y)\omega_{\varepsilon}(y)dy\right|^{2}\frac{dtdx}{t^{\alpha/\beta}}\right)^{1/2}\\
&\leq&\int_{\mathbb{R}^{3}}|\omega_{\varepsilon}(y)|\left(r^{2\alpha-3+2\beta-2}\int_{0}^{r^{2\beta}}\int_{\mathbb{R}^{3}}
\left|f(t,x-y)\right|^{2}\chi_{B(x_{0},r)}\frac{dtdx}{t^{\alpha/\beta}}\right)^{1/2}dy\\
&\leq&\int_{\mathbb{R}^{3}}|\omega_{\varepsilon}(y)|\left(r^{2\alpha-3+2\beta-2}\int_{0}^{r^{2\beta}}\int_{|x_{1}-(x_{0}-y)|<r}
\left|f(t,x_{1})\right|^{2}\frac{dtdx_{1}}{t^{\alpha/\beta}}\right)^{1/2}dy\\
&\leq&\int_{\mathbb{R}^{3}}|\omega_{\varepsilon}(y)|dy\sup_{z\in\mathbb{R}^{3}}\sup_{r^{2\beta}\in(0,T]}
\left(r^{2\alpha-3+2\beta-2}\int_{0}^{r^{2\beta}}\int_{|x_{1}-z|<r}
\left|f(t,x_{1})\right|^{2}\frac{dtdx_{1}}{t^{\alpha/\beta}}\right)^{1/2}\\
&\leq&
\|\omega_{\varepsilon}\|_{L^{1}(\mathbb{R}^{3})}\|f\|_{X^{\beta}_{\alpha;T}(\mathbb{R}^{3})}.
\end{eqnarray*}
Similar to the proof of  Theorem \ref{th2}, we can complete the
proof.

\end{proof}
\begin{theorem}\label{th4}
For $\alpha>0$, $\max\left\{\frac{1}{2},\alpha\right\}<\beta\leq1$
with $\alpha+\beta-1\geq0,$ let
$u_{0}\in\overline{\mathcal{D}(\mathbb{R}^{3})}^{Q^{\beta,-1}_{\alpha;loc}(\mathbb{R}^{3})}$
and  $T>0$ be given in Theorem \ref{th2}. Then
  the sequence of  solutions $\{u_{\varepsilon}\}_{\varepsilon>0}$ to the
mollified equations (\ref{eq3}) obtained by Theorem \ref{th3}
converges strongly, as $\varepsilon$ tends to $0,$  to the mild
solution $u$ to equations (\ref{eq1e})  obtained by Picard
contraction principle, of Theorem \ref{th2}.
\end{theorem}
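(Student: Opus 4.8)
The plan is to use that the mild solution $u$ from Theorem \ref{th2} and each mollified solution $u_{\varepsilon}$ from Theorem \ref{th3} are fixed points of contractions sharing the \emph{same} linear part $v_{0}=e^{-t(-\Delta)^{\beta}}u_{0}$, so that the difference $u_{\varepsilon}-u$ is governed by the discrepancy between the bilinear operators $B$ and $B_{\varepsilon}$. Subtracting $u=v_{0}-B(u,u)$ from $u_{\varepsilon}=v_{0}-B_{\varepsilon}(u_{\varepsilon},u_{\varepsilon})$ gives $\|u_{\varepsilon}-u\|_{X^{\beta}_{\alpha;T}}=\|B_{\varepsilon}(u_{\varepsilon},u_{\varepsilon})-B(u,u)\|_{X^{\beta}_{\alpha;T}}$, and I would decompose
\begin{align*}
B_{\varepsilon}(u_{\varepsilon},u_{\varepsilon})-B(u,u)
&=B_{\varepsilon}(u_{\varepsilon}-u,\,u_{\varepsilon})+B_{\varepsilon}(u,\,u_{\varepsilon}-u)+\bigl(B_{\varepsilon}(u,u)-B(u,u)\bigr),
\end{align*}
observing that by the definition of $B_{\varepsilon}$ one has $B_{\varepsilon}(f,g)=B(f\ast\omega_{\varepsilon},g)$, so the last bracket equals $B\bigl(u\ast\omega_{\varepsilon}-u,\,u\bigr)$.

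Next I would record a bilinear bound uniform in $\varepsilon$. The computation in the proof of Theorem \ref{th3} shows $\|f\ast\omega_{\varepsilon}\|_{X^{\beta}_{\alpha;T}(\mathbb{R}^{3})}\le\|f\|_{X^{\beta}_{\alpha;T}(\mathbb{R}^{3})}$ since $\|\omega_{\varepsilon}\|_{L^{1}}=1$, so the bilinear estimate \eqref{eq13} yields $\|B_{\varepsilon}(f,g)\|_{X^{\beta}_{\alpha;T}}=\|B(f\ast\omega_{\varepsilon},g)\|_{X^{\beta}_{\alpha;T}}\le C\|f\|_{X^{\beta}_{\alpha;T}}\|g\|_{X^{\beta}_{\alpha;T}}$ with $C$ independent of $\varepsilon$. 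Feeding this into the decomposition and using the a priori bounds $\|u\|_{X^{\beta}_{\alpha;T}},\,\|u_{\varepsilon}\|_{X^{\beta}_{\alpha;T}}\le 2\|v_{0}\|_{X^{\beta}_{\alpha;T}}$ established in Theorems \ref{th2} and \ref{th3}, I obtain
$$\|u_{\varepsilon}-u\|_{X^{\beta}_{\alpha;T}}\le C\bigl(\|u\|_{X^{\beta}_{\alpha;T}}+\|u_{\varepsilon}\|_{X^{\beta}_{\alpha;T}}\bigr)\|u_{\varepsilon}-u\|_{X^{\beta}_{\alpha;T}}+C\|u\|_{X^{\beta}_{\alpha;T}}\,\|u\ast\omega_{\varepsilon}-u\|_{X^{\beta}_{\alpha;T}}.$$
The smallness hypothesis $\|v_{0}\|_{X^{\beta}_{\alpha;T}}<\tfrac{1}{4C}$ forces $\delta:=C(\|u\|_{X^{\beta}_{\alpha;T}}+\|u_{\varepsilon}\|_{X^{\beta}_{\alpha;T}})\le 4C\|v_{0}\|_{X^{\beta}_{\alpha;T}}<1$, so the first term is absorbed and
$$\|u_{\varepsilon}-u\|_{X^{\beta}_{\alpha;T}}\le\frac{C\|u\|_{X^{\beta}_{\alpha;T}}}{1-\delta}\,\|u\ast\omega_{\varepsilon}-u\|_{X^{\beta}_{\alpha;T}}.$$

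Everything therefore reduces to showing $\|u\ast\omega_{\varepsilon}-u\|_{X^{\beta}_{\alpha;T}}\to0$ as $\varepsilon\to0$, which is the heart of the argument. Here I would exploit $u\in\overline{\mathcal{D}((0,T]\times\mathbb{R}^{3})}^{X^{\beta}_{\alpha;T}}$: given $\eta>0$, choose $\phi\in\mathcal{D}((0,T]\times\mathbb{R}^{3})$ with $\|u-\phi\|_{X^{\beta}_{\alpha;T}}<\eta$, and split
$$\|u\ast\omega_{\varepsilon}-u\|_{X^{\beta}_{\alpha;T}}\le\|(u-\phi)\ast\omega_{\varepsilon}\|_{X^{\beta}_{\alpha;T}}+\|\phi\ast\omega_{\varepsilon}-\phi\|_{X^{\beta}_{\alpha;T}}+\|\phi-u\|_{X^{\beta}_{\alpha;T}},$$
where the contraction property of $\ast\omega_{\varepsilon}$ controls the first summand by $\|u-\phi\|_{X^{\beta}_{\alpha;T}}<\eta$. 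Thus it suffices to handle a fixed compactly supported smooth $\phi$.

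The main obstacle is this last quantitative step. For fixed $\phi$ the mollification converges uniformly, $\|\phi\ast\omega_{\varepsilon}-\phi\|_{L^{\infty}((0,T)\times\mathbb{R}^{3})}\to0$, and I would bound the $X^{\beta}_{\alpha;T}$-norm of $g:=\phi\ast\omega_{\varepsilon}-\phi$ by its sup-norm. The first part of the norm gives $\sup_{t}t^{1-\frac{1}{2\beta}}\|g(t,\cdot)\|_{L^{\infty}}\le T^{1-\frac{1}{2\beta}}\|g\|_{L^{\infty}}$, with $1-\tfrac{1}{2\beta}>0$ since $\beta>\tfrac12$. For the Carleson part, using $|g|^{2}\le\|g\|_{L^{\infty}}^{2}$ together with $\alpha<\beta$ (so $\int_{0}^{r^{2\beta}}t^{-\alpha/\beta}\,dt\lesssim r^{2\beta-2\alpha}$) and $|\{|y-x_{0}|<r\}|\lesssim r^{3}$, the prefactor $r^{2\alpha-3+2\beta-2}$ combines to a power $r^{4\beta-2}$; after the square root this is $\lesssim\|g\|_{L^{\infty}}\,r^{2\beta-1}$ with $2\beta-1>0$, hence $\lesssim\|g\|_{L^{\infty}}\,T^{(2\beta-1)/2\beta}\to0$. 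Choosing $\varepsilon$ small makes the middle term $<\eta$, so the whole expression is $<3\eta$; letting $\eta\to0$ gives $\|u\ast\omega_{\varepsilon}-u\|_{X^{\beta}_{\alpha;T}}\to0$, and the reduction inequality then yields the strong convergence $u_{\varepsilon}\to u$ in $X^{\beta}_{\alpha;T}(\mathbb{R}^{3})$. The delicate point to verify is precisely that the two exponents $\alpha/\beta<1$ and $2\beta-1>0$ cooperate so that the mollification error is controlled by its sup-norm times a positive power of $T$; the remainder is routine Banach-contraction bookkeeping.
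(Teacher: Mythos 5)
Your argument is correct, and its skeleton --- subtract the two fixed-point equations, expand $B_{\varepsilon}(u_{\varepsilon},u_{\varepsilon})-B(u,u)$ into three bilinear terms, estimate them via \eqref{eq13} together with the contraction property $\|f\ast\omega_{\varepsilon}\|_{X^{\beta}_{\alpha;T}(\mathbb{R}^{3})}\leq\|f\|_{X^{\beta}_{\alpha;T}(\mathbb{R}^{3})}$ from Theorem \ref{th3}, absorb the $\|u-u_{\varepsilon}\|_{X^{\beta}_{\alpha;T}(\mathbb{R}^{3})}$ terms using $4C\|e^{-t(-\Delta)^{\beta}}u_{0}\|_{X^{\beta}_{\alpha;T}(\mathbb{R}^{3})}<1$, and thereby reduce the whole theorem to $\|u\ast\omega_{\varepsilon}-u\|_{X^{\beta}_{\alpha;T}(\mathbb{R}^{3})}\to0$ --- is exactly the paper's; your three-term splitting is a trivially equivalent variant of the paper's $B(u,u-u_{\varepsilon})+B(u-u\ast\omega_{\varepsilon},u_{\varepsilon})+B((u-u_{\varepsilon})\ast\omega_{\varepsilon},u_{\varepsilon})$. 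Where you genuinely add something is in the final step. The paper disposes of it in one line, asserting that $\omega_{\varepsilon}\ast u\in\mathcal{D}(\mathbb{R}^{3}\times(0,T))$ and that therefore the mollification error tends to zero for $u$ in the closure of the test functions; as stated this is not a proof, since a spatial mollification of $u$ is in general neither compactly supported nor a test function. Your $\eta/3$ argument supplies the missing content: approximate $u$ by $\phi\in\mathcal{D}((0,T]\times\mathbb{R}^{3})$, use the $L^{1}$-contraction of $\ast\omega_{\varepsilon}$ on the tail $(u-\phi)\ast\omega_{\varepsilon}$, and dominate $\|\phi\ast\omega_{\varepsilon}-\phi\|_{X^{\beta}_{\alpha;T}(\mathbb{R}^{3})}$ by its sup-norm; your exponent bookkeeping is exactly right, namely $\int_{0}^{r^{2\beta}}t^{-\alpha/\beta}\,dt\lesssim r^{2\beta-2\alpha}$ (legitimate because $\alpha<\beta$ by hypothesis), so the Carleson expression carries the power $r^{4\beta-2}$ before the square root and is bounded by $\|\phi\ast\omega_{\varepsilon}-\phi\|_{L^{\infty}}T^{(2\beta-1)/(2\beta)}$ after it, with $2\beta-1>0$ since $\beta>\tfrac12$. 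So: same route as the paper, but with the one step the paper hand-waves carried out in full --- a worthwhile improvement rather than a different method.
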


\begin{proof}
For the bilinear form $B(u,v)$,  we have
\begin{eqnarray*}
u-u_{\varepsilon}&=&B(u,u)-B_{\varepsilon}(u_{\varepsilon},u_{\varepsilon})\\
&=&B(u,u)-B(u_{\varepsilon}\ast\omega_{\varepsilon},u_{\varepsilon})\\
&=&B(u,u-u_{\varepsilon})+B(u-(u\ast\omega_{\varepsilon}),u_{\varepsilon})+B((u-u_{\varepsilon})\ast\omega_{\varepsilon},u_{\varepsilon})
\end{eqnarray*}
and
\begin{eqnarray*}
&&\|u-u_{\varepsilon}\|_{X^{\beta}_{\alpha;T}(\mathbb{R}^{3})}\\
&\leq&
C\|u\|_{X^{\beta}_{\alpha;T}(\mathbb{R}^{3})}\|u-u_{\varepsilon}\|_{X^{\beta}_{\alpha;T}(\mathbb{R}^{3})}
+C\|u-(u\ast\omega_{\varepsilon})\|_{X^{\beta}_{\alpha;T}(\mathbb{R}^{3})}
\|u_{\varepsilon}\|_{X^{\beta}_{\alpha;T}(\mathbb{R}^{3})}\\
&&+C\|(u-u_{\varepsilon})\ast\omega_{\varepsilon}\|_{X^{\beta}_{\alpha;T}(\mathbb{R}^{3})(\mathbb{R}^{3})}
\|u_{\varepsilon}\|_{X^{\beta}_{\alpha;T}(\mathbb{R}^{3})}\\
&:=&A_{1}+A_{2}+A_{3}
\end{eqnarray*}
where $A_{3}\leq
C\|\omega_{\varepsilon}\|_{L^{1}(\mathbb{R}^{3})}\|u-u_{\varepsilon}\|_{X^{\beta}_{\alpha;T}(\mathbb{R}^{3})}
\|u_{\varepsilon}\|_{X^{\beta}_{\alpha;T}(\mathbb{R}^{3})}.$ Hence
we have
\begin{eqnarray*}
&&\|u-u_{\varepsilon}\|_{X^{\beta}_{\alpha;T}(\mathbb{R}^{3})}\\
&\leq&2C\|u-u_{\varepsilon}\|_{X^{\beta}_{\alpha;T}(\mathbb{R}^{3})}\|u_{\varepsilon}\|_{X^{\beta}_{\alpha;T}(\mathbb{R}^{3})}
+2C\|u-(u\ast\omega_{\varepsilon})\|_{X^{\beta}_{\alpha;T}(\mathbb{R}^{3})}
\|u_{\varepsilon}\|_{X^{\beta}_{\alpha;T}(\mathbb{R}^{3})}\\
&\leq&4C\|e^{-t(-\Delta)^{\beta}}u_{0}\|_{X^{\beta}_{\alpha;T}(\mathbb{R}^{3})}\|u-u_{\varepsilon}\|_{X^{\beta}_{\alpha;T}(\mathbb{R}^{3})}
+2C\|e^{-t(-\Delta)^{\beta}}u_{0}\|_{X^{\beta}_{\alpha;T}(\mathbb{R}^{3})}\|u-(u\ast\omega_{\varepsilon})\|_{X^{\beta}_{\alpha;T}(\mathbb{R}^{3})}.
\end{eqnarray*}
This tells us
$$\|u-u_{\varepsilon}\|_{X^{\beta}_{\alpha;T}(\mathbb{R}^{3})}\leq\frac{2C\|e^{-t(-\Delta)^{\beta}}u_{0}\|_{X^{\beta}_{\alpha;T}(\mathbb{R}^{3})}}
{1-4C\|e^{-t(-\Delta)^{\beta}}u_{0}\|_{X^{\beta}_{\alpha;T}(\mathbb{R}^{3})}}\|u-(u\ast\omega_{\varepsilon})\|_{X^{\beta}_{\alpha;T}(\mathbb{R}^{3})}.$$
Since $\omega_{\varepsilon}\in\mathcal{D}(\mathbb{R}^{3})$,
$\omega_{\varepsilon}\ast
u\in\mathcal{D}(\mathbb{R}^{3}\times(0,T))$. Thus,  for
$u\in\overline{\mathcal{D}(\mathbb{R}^{3}\times(0,T))}^{X^{\beta}_{\alpha;T}(\mathbb{R}^{3})},$
$$\|u-(u\ast\omega_{\varepsilon})\|_{X_{\alpha;T}^{\beta}(\mathbb{R}^{3})}\longrightarrow0\quad\text{as }\varepsilon\rightarrow0.$$
\end{proof}

Now we recall a class of weak Besov spaces which can be found in
\cite{LRP}.
\begin{definition}\label{Def2}
Let $\alpha>0$, $1<q<\infty$. We denote by
$\widetilde{B}^{-\alpha,\infty}_{q}(\mathbb{R}^{3})$ the adherence
of functions in $L^{q}(\mathbb{R}^{3})$ for the norm of
$B^{-\alpha,\infty}_{q}(\mathbb{R}^{3})$ and by
$\widetilde{B}^{-\alpha,\infty}_{q,\infty}(\mathbb{R}^{3})$ for
functions in $L^{q,\infty}(\mathbb{R}^{3})$ for the norm of
$B^{-\alpha,\infty}_{q,\infty}(\mathbb{R}^{3})=B^{-\alpha,\infty}_{L^{q},\infty}(\mathbb{R}^{3})$,
that is,
$$\widetilde{B}^{-\alpha,\infty}_{q}(\mathbb{R}^{3})=\overline{L^{q}(\mathbb{R}^{3})}^{B^{-\alpha,\infty}_{q,\infty}(\mathbb{R}^{3})}
\quad\text{and}\quad\widetilde{B}^{-\alpha,\infty}_{q,\infty}(\mathbb{R}^{3})
=\overline{L^{q,\infty}(\mathbb{R}^{3})}^{B^{-\alpha,\infty}_{q,\infty}(\mathbb{R}^{3})}.$$

\end{definition}
\begin{lemma}\label{le4}
Let $\frac{1}{2}<\beta< 1$ and let $\alpha>0$ and $1<q<\infty$.
If\quad
$u\in\widetilde{B}^{-\alpha,\infty}_{q,\infty}(\mathbb{R}^{3})$,
then

\begin{equation} \label{eq7}
\left\{ \begin{aligned}
         \sup_{0<t<1}t^{\alpha/2\beta}\|e^{-t(-\Delta)^{\beta}}u(t)\|_{L^{q,\infty}(\mathbb{R}^{3})}&<\infty \\
                 t^{\alpha/2\beta}\|e^{-t(-\Delta)^{\beta}}u(t)\|_{L^{q,\infty}(\mathbb{R}^{3})}&\longrightarrow0,(\hbox{as}\ t\rightarrow0).
                          \end{aligned} \right.
                          \end{equation}

\end{lemma}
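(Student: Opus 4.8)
The statement asserts two facts about the heat-type semigroup acting on an element $u$ of the ``approximable'' weak Besov space $\widetilde{B}^{-\alpha,\infty}_{q,\infty}(\mathbb{R}^{3})$: a uniform bound and a vanishing-at-zero property for the quantity $t^{\alpha/2\beta}\|e^{-t(-\Delta)^{\beta}}u\|_{L^{q,\infty}}$. The uniform bound is essentially a restatement of the definition of the Besov norm $B^{-\alpha,\infty}_{q,\infty}(\mathbb{R}^{3})$ in terms of the fractional heat semigroup. The plan is to first recall that, by the semigroup characterization of negative-order Besov spaces, $\sup_{0<t<1}t^{\alpha/2\beta}\|e^{-t(-\Delta)^{\beta}}u\|_{L^{q,\infty}}$ is comparable to $\|u\|_{B^{-\alpha,\infty}_{q,\infty}(\mathbb{R}^{3})}$, so the first line of (\ref{eq7}) follows immediately from $u\in B^{-\alpha,\infty}_{q,\infty}(\mathbb{R}^{3})$.

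For the second, harder line I would exploit the defining feature of the space $\widetilde{B}^{-\alpha,\infty}_{q,\infty}(\mathbb{R}^{3})$: by Definition \ref{Def2} it is the closure of $L^{q,\infty}(\mathbb{R}^{3})$ in the Besov norm, so $u$ is approximable by a sequence $u_{k}\in L^{q,\infty}(\mathbb{R}^{3})$ with $\|u-u_{k}\|_{B^{-\alpha,\infty}_{q,\infty}}\to 0$. The idea is a standard density/$\varepsilon$-split argument. Writing $u=(u-u_{k})+u_{k}$, I bound
\[
t^{\alpha/2\beta}\|e^{-t(-\Delta)^{\beta}}u\|_{L^{q,\infty}}
\leq t^{\alpha/2\beta}\|e^{-t(-\Delta)^{\beta}}(u-u_{k})\|_{L^{q,\infty}}
+t^{\alpha/2\beta}\|e^{-t(-\Delta)^{\beta}}u_{k}\|_{L^{q,\infty}}.
\]
For the first term I use the semigroup bound from the previous paragraph, so it is controlled by $C\|u-u_{k}\|_{B^{-\alpha,\infty}_{q,\infty}}$, uniformly in $t$, and hence can be made smaller than any prescribed $\eta$ by choosing $k$ large. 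For the second term, with $k$ now fixed, I use that $u_{k}\in L^{q,\infty}(\mathbb{R}^{3})$ together with the contraction of the kernel on $L^{q,\infty}$ (the same $L^{1}$-kernel estimate used in Lemma \ref{le2}, giving $\|e^{-t(-\Delta)^{\beta}}u_{k}\|_{L^{q,\infty}}\leq C\|u_{k}\|_{L^{q,\infty}}$): then $t^{\alpha/2\beta}\|e^{-t(-\Delta)^{\beta}}u_{k}\|_{L^{q,\infty}}\leq C t^{\alpha/2\beta}\|u_{k}\|_{L^{q,\infty}}\to 0$ as $t\to 0$ since $\alpha>0$. Combining, $\limsup_{t\to 0}t^{\alpha/2\beta}\|e^{-t(-\Delta)^{\beta}}u\|_{L^{q,\infty}}\leq\eta$ for every $\eta>0$, which gives the claimed limit.

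The main obstacle is making the semigroup characterization of $B^{-\alpha,\infty}_{q,\infty}(\mathbb{R}^{3})$ precise in the weak-$L^{q}$ (Lorentz) setting rather than the classical $L^{q}$ one, i.e. justifying that $t^{\alpha/2\beta}\|e^{-t(-\Delta)^{\beta}}u\|_{L^{q,\infty}}\lesssim\|u\|_{B^{-\alpha,\infty}_{q,\infty}}$ with the fractional Laplacian $(-\Delta)^{\beta}$ in place of the ordinary heat kernel. Here I would appeal to the Gaussian-type pointwise bound on the generalized kernel, $|p_{t}^{\beta}(x)|\leq C t^{-3/2\beta}(1+t^{-1/2\beta}|x|)^{-3}$ (the same kernel estimate invoked in Lemma \ref{le2}), which shows $e^{-t(-\Delta)^{\beta}}$ acts boundedly on $L^{q,\infty}$ via Young's inequality for weak spaces and reproduces the correct scaling $t^{\alpha/2\beta}$ matching the homogeneity of the Besov norm. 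The density step itself is then routine, as is the verification that the exponent $\alpha/2\beta$ is positive; once the semigroup bound is in hand the argument is the classical approximation lemma, exactly as in the $\beta=1$ case of \cite{LRP}.
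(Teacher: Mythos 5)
Your proposal is correct and takes essentially the same route as the paper's own proof: the uniform bound is read off from the semigroup characterization of $B^{-\alpha,\infty}_{q,\infty}(\mathbb{R}^{3})$, and the vanishing limit is obtained by the same $\varepsilon$-split against an approximating sequence in $L^{q,\infty}(\mathbb{R}^{3})$, controlling the remainder term via Young's inequality (boundedness of $e^{-t(-\Delta)^{\beta}}$ on $L^{q,\infty}$) and the positivity of $\alpha/2\beta$. The only cosmetic difference is that the paper fixes an explicit threshold $t_{0}$ depending on $\|u_{N+1}\|_{L^{q,\infty}(\mathbb{R}^{3})}$ where you take a $\limsup$.
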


\begin{proof}
Since
$u\in\widetilde{B}^{-\alpha,\infty}_{q,\infty}(\mathbb{R}^{3})$, we
have $u\in B^{-\alpha,\infty}_{q,\infty}(\mathbb{R}^{3})$. Then
$$\sup_{t>0}t^{\alpha/2\beta}\|e^{-t(-\Delta)^{\beta}}u(t)\|_{L^{q,\infty}(\mathbb{R}^{3})}<\infty$$
and there exists a sequence $\{u_{n}\}_{n\in\mathbb{N}}$ of
functions in $L^{q,\infty}(\mathbb{R}^{3})$ such that
$$\|(u_{n}-u)(t)\|_{B^{-\alpha,\infty}_{q,\infty}(\mathbb{R}^{3})}\longrightarrow \ 0,\quad \ \hbox{as}\ t\rightarrow0.$$
So there exists $N>0$ such that for $n>N$,
$$\sup_{t>0}t^{\alpha/2\beta}\|e^{-t(-\Delta)^{\beta}}(u_{n}-u)(t)\|_{L^{q,\infty}(\mathbb{R}^{3})}<\frac{\varepsilon}{2}.$$
Then for all $t>0$ we have, by Young's inequality,
$$t^{\alpha/2\beta}\|e^{-t(-\Delta)^{\beta}}u(t)\|_{L^{q,\infty}(\mathbb{R}^{3})}
<\frac{\varepsilon}{2}+t^{\alpha/2\beta}\|e^{-t(-\Delta)^{\beta}}u_{N+1}(t)\|_{L^{q,\infty}(\mathbb{R}^{3})}
<\frac{\varepsilon}{2}+Ct^{\alpha/2\beta}\|u_{N+1}(t)\|_{L^{q,\infty}(\mathbb{R}^{3})}.$$
Let
$t_{0}=\varepsilon^{2\beta/\alpha}(2C\|u_{N+1}(t)\|_{L^{q,\infty}(\mathbb{R}^{3})})^{-(2\beta/\alpha)}$,
we see that  for $t<t_{0}$,
$$t^{\alpha/2\beta}\|e^{-t(-\Delta)^{\beta}}u(t)\|_{L^{q,\infty}(\mathbb{R}^{3})}<\frac{\varepsilon}{2}+\frac{\varepsilon}{2}=\varepsilon.$$
\end{proof}
%By a similar manner, we can get
%\begin{corollary}
%Let $\frac{1}{2}<\beta\leq 1$ and let $\alpha>0$ and $1<q<\infty$.
%If
% $u\in\widetilde{B}^{-\alpha,\infty}_{q}(\mathbb{R}^{3})$, then
%\begin{equation} \label{eq7}
%\left\{ \begin{aligned}
%         \sup_{0<t<1}t^{\alpha/2\beta}\|e^{-t(-\Delta)^{\beta}}u(t)\|_{L^{q}(\mathbb{R}^{3})}&<\infty; \\
%                t^{\alpha/2\beta}\|e^{-t(-\Delta)^{\beta}}u(t)\|_{L^{q}(\mathbb{R}^{3})}&\longrightarrow0,\quad \hbox{as}\  t\rightarrow0.
%                          \end{aligned} \right.
%                          \end{equation}
%\end{corollary}

The following result gives us a condition for initial data under
which the solution to equations (\ref{eq1e}) for $\beta\in (1/2,1)$
belongs to the weak Lorentz spaces. Similar results hold for
$\beta=1,$ see Lemari$\acute{e}$-Rieusset and Prioux \cite{LRP}.

\begin{theorem}\label{th5}
Let $\frac{1}{2}<\beta<1$ and let
$\frac{2\beta}{2\beta-1}<p\leq\infty$ and
$\frac{3}{2\beta-1}<q\leq\infty$ such that
$\frac{\beta}{p}+\frac{3}{2q}=\beta-\frac{1}{2}$ and
$u_{0}\in\widetilde{B}^{-\alpha,\infty}_{L^{q,\infty}}(\mathbb{R}^{3})$
such that $\nabla\cdot u_{0}$. Then there exists $T>0$ and a mild
solution $u$ to equations (\ref{eq1e}) in the space
$\widetilde{L}^{p,\infty}((0,T),L^{q,\infty}(\mathbb{R}^{3}))$.
\end{theorem}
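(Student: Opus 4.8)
The plan is to solve the mild (integral) formulation
$$u = e^{-t(-\Delta)^{\beta}}u_{0} - B(u,u)$$
by the Picard contraction principle in the space $E_{T}:=\widetilde{L}^{p,\infty}((0,T),L^{q,\infty}(\mathbb{R}^{3}))$, equipped with the $L^{p,\infty}((0,T),L^{q,\infty}(\mathbb{R}^{3}))$ norm. Since $p>1$, this weak norm is equivalent to a genuine norm, and $E_{T}$ is a closed subspace of $L^{p,\infty}$, hence a Banach space on which the contraction lemma for quadratic equations applies. Two ingredients are needed: the free evolution $e^{-t(-\Delta)^{\beta}}u_{0}$ must lie in $E_{T}$ with norm that can be driven to zero, and $B$ must act boundedly from $E_{T}\times E_{T}$ into $E_{T}$. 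I will treat these in turn, then combine them.

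For the linear term I use the critical identification $\frac{\alpha}{2\beta}=\frac{1}{p}$, i.e. $\alpha=2\beta/p=2\beta-1-\frac{3}{q}$, which is exactly the Besov index for which the free evolution has the time-decay matching the scaling $\frac{\beta}{p}+\frac{3}{2q}=\beta-\frac12$ of $E_{T}$. Applying Lemma \ref{le4} to $u_{0}\in\widetilde{B}^{-\alpha,\infty}_{L^{q,\infty}}(\mathbb{R}^{3})$ shows that $g(t):=t^{1/p}\|e^{-t(-\Delta)^{\beta}}u_{0}\|_{L^{q,\infty}(\mathbb{R}^{3})}$ is bounded on $(0,1)$ and $g(t)\to 0$ as $t\to 0$; hence Lemma \ref{le3} gives $e^{-t(-\Delta)^{\beta}}u_{0}\in E_{T}$. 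Crucially, the little-$o$ decay of $g$ makes the $E_{T}$-norm small for small $T$: given $\varepsilon>0$, pick $\delta$ with $g(t)<\varepsilon$ on $(0,\delta)$; then for $T\le\delta$ one has $\|e^{-t(-\Delta)^{\beta}}u_{0}(t)\|_{L^{q,\infty}}<\varepsilon t^{-1/p}$ on $(0,T)$, so $|\{t\in(0,T):\|e^{-t(-\Delta)^{\beta}}u_{0}(t)\|_{L^{q,\infty}}>\lambda\}|\le(\varepsilon/\lambda)^{p}$ and therefore $\|e^{-t(-\Delta)^{\beta}}u_{0}\|_{E_{T}}\le\varepsilon$. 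Thus $\|e^{-t(-\Delta)^{\beta}}u_{0}\|_{E_{T}}\to 0$ as $T\to 0$.

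For the bilinear term, Lemma \ref{le1}(1) applies (as $p\neq\frac{2\beta}{2\beta-1}$) and yields $\|B(u,v)\|_{L^{p,\infty}}\le C\|u\|_{L^{p,\infty}}\|v\|_{L^{p,\infty}}$. It remains to check that $B$ preserves $E_{T}$. Given $u,v\in E_{T}$, choose $u_{n},v_{n}\in L^{\infty}((0,T),L^{q,\infty})$ with $u_{n}\to u$, $v_{n}\to v$ in $L^{p,\infty}$; bilinearity together with the estimate above gives $B(u_{n},v_{n})\to B(u,v)$ in $L^{p,\infty}$, while Lemma \ref{le1}(3) places each $B(u_{n},v_{n})$ in $L^{\infty}((0,T),L^{q,\infty})\subset E_{T}$. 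As $E_{T}$ is closed in $L^{p,\infty}$, the limit $B(u,v)$ lies in $E_{T}$, so $B:E_{T}\times E_{T}\to E_{T}$ is bounded with the constant $C$ of Lemma \ref{le1}(1).

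Finally, choose $T$ so small that $4C\|e^{-t(-\Delta)^{\beta}}u_{0}\|_{E_{T}}<1$, which is possible by the second paragraph. The Picard contraction principle (the same iteration scheme as in Lemma \ref{le2} and Theorem \ref{th2}, now run in $E_{T}$) then produces a fixed point $u\in\widetilde{L}^{p,\infty}((0,T),L^{q,\infty}(\mathbb{R}^{3}))$, which is the desired mild solution. I expect the smallness step to be the main obstacle: mere boundedness of $t^{1/p}\|e^{-t(-\Delta)^{\beta}}u_{0}\|_{L^{q,\infty}}$ (which holds for the full space $B^{-\alpha,\infty}_{q,\infty}$) does not make the $E_{T}$-norm shrink as $T\to 0$, so the contraction genuinely relies on the little-$o$ decay of Lemma \ref{le4}, that is, on $u_{0}$ belonging to the adherence $\widetilde{B}^{-\alpha,\infty}_{L^{q,\infty}}(\mathbb{R}^{3})$ of $L^{q,\infty}$ rather than to the full Besov space.
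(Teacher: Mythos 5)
Your proof is correct, and it rests on the same pillars as the paper's own argument --- Picard iteration for $u=e^{-t(-\Delta)^{\beta}}u_{0}-B(u,u)$ in the space $E_{T}:=\widetilde{L}^{p,\infty}((0,T),L^{q,\infty}(\mathbb{R}^{3}))$, with Lemmas \ref{le4} and \ref{le3} handling the free evolution and Lemma \ref{le1} handling the bilinear term --- but it packages two key steps differently. For the bilinear term, the paper never proves, as you do, that $B$ maps $E_{T}\times E_{T}$ boundedly into $E_{T}$ (approximation by $L^{\infty}((0,T),L^{q,\infty}(\mathbb{R}^{3}))$ functions, Lemma \ref{le1}(3) for the approximants, then Lemma \ref{le1}(1) plus closedness of $E_{T}$ to pass to the limit); instead it argues inductively that each Picard iterate stays in $\widetilde{L}^{p,\infty}$, splitting $v_{n-1}=v^{1}_{n-1}+v^{2}_{n-1}$ via Proposition \ref{pro1}(3), expanding $B(v_{n-1},v_{n-1})$ into four terms, and controlling the group $M_{1}$ in $L^{\infty}((0,T),L^{q,\infty}(\mathbb{R}^{3}))$ by Lemma \ref{le1}(2)--(3) and $M_{2}=B(v^{2}_{n-1},v^{2}_{n-1})$ in $L^{p,\infty}$ by Lemma \ref{le1}(1). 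The two routes implement the same characterization of $\widetilde{L}^{p,\infty}$ from Proposition \ref{pro1}, but yours is the cleaner abstraction: the stability of $E_{T}$ under $B$ is established once, rather than re-derived at every iteration. More importantly, your second paragraph supplies a step the paper only asserts: the paper writes ``Taking $4C\|e^{-t(-\Delta)^{\beta}}u_{0}\|_{L^{p,\infty}((0,T),L^{q,\infty}(\mathbb{R}^{3}))}<1$'' without explaining why a small $T$ achieves this, and its statement of the theorem never pins down $\alpha$; your identification $\alpha=2\beta/p=2\beta-1-\frac{3}{q}$, together with the distribution-function estimate showing $\|e^{-t(-\Delta)^{\beta}}u_{0}\|_{E_{T}}\leq\varepsilon$ once $T$ is small, is exactly the missing justification, and, as you correctly observe, it is the one place where membership in the adherence $\widetilde{B}^{-\alpha,\infty}_{L^{q,\infty}}(\mathbb{R}^{3})$ --- rather than in the full Besov space --- is genuinely used.
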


\begin{proof}
We construct the sequence $\{v_{n}\}_{n\in\mathbb{N}}$ as follows:
\begin{equation} \label{eq:1}
\left\{ \begin{aligned}
         v_{n}&=v_{0}-B(v_{n-1},v_{n-1})\quad\text{for } n\geq1,\\
                 v_{0}&=e^{-t(-\Delta)^{\beta}}u_{0}.
                          \end{aligned} \right.
                          \end{equation}
 We prove that for every $n\in\mathbb{N}$, the function
$v_{n}$ belongs to the space
$\widetilde{L}^{p,\infty}((0,T),L^{q,\infty}(\mathbb{R}^{3}))$. Then
we will use an induction argument on $n$.

For $n=0$, by assumption,
$u_{0}\in\widetilde{B}^{-\alpha,\infty}_{L^{q,\infty}}(\mathbb{R}^{3})$.
By Lemma \ref{le4},
\begin{equation} \label{eq:2}
\left\{ \begin{aligned}
        \sup_{t\in(0,T)}t^{1/p}\|v_{0}(t)\|_{L^{q,\infty}(\mathbb{R}^{3})}&<\sup_{t>0}t^{1/p}
        \|e^{-t(-\Delta)^{\beta}}u_{0}(t)\|_{L^{q,\infty}(\mathbb{R}^{3})}<\infty, \\
                  t^{1/p}\|v_{0}(t)\|_{L^{q,\infty}(\mathbb{R}^{3})}&=t^{1/p}
                  \|e^{-t(-\Delta)^{\beta}}u_{0}(t)\|_{L^{q,\infty}(\mathbb{R}^{3})}\longrightarrow0,\quad(\hbox{as}\ t\rightarrow0).
                          \end{aligned} \right.
                          \end{equation}
By Lemma \ref{le3}, we have
$v_{0}\in\widetilde{L}^{p,\infty}{((0,T),L^{q,\infty}(\mathbb{R}^{3}))}.$

Next we assume that
$v_{n-1}\in\widetilde{L}^{p,\infty}{((0,T),L^{q,\infty}(\mathbb{R}^{3}))}.$
Let $\varepsilon>0$, as
$v_{n-1}\in\widetilde{L}^{p,\infty}{((0,T),L^{q,\infty}(\mathbb{R}^{3}))},$
there exist two functions $v^{1}_{n-1}\in
L^{\infty}((0,T),L^{q,\infty}(\mathbb{R}^{3}))$ and $v^{2}_{n-1}\in
L^{p,\infty}((0,T),L^{q,\infty}(\mathbb{R}^{3}))$ such that
$\|v^{2}_{n-1}\|_{L^{p,\infty}((0,T),L^{q,\infty}(\mathbb{R}^{3}))}\leq\varepsilon$
and $v_{n-1}=v^{1}_{n-1}+v^{2}_{n-1}.$ We have
\begin{eqnarray*}
B(v_{n-1},v_{n-1})&=&B(v^{1}_{n-1}+v^{2}_{n-1},v^{1}_{n-1}+v^{2}_{n-1})\\
&=&B(v^{1}_{n-1},v^{1}_{n-1})+B(v^{2}_{n-1},v^{1}_{n-1})+B(v^{1}_{n-1},v^{2}_{n-1})+B(v^{2}_{n-1},v^{2}_{n-1})\\
&:=&M_{1}+M_{2}
\end{eqnarray*}
with
$$M_{1}=B(v^{1}_{n-1},v^{1}_{n-1})+B(v^{2}_{n-1},v^{1}_{n-1})+B(v^{1}_{n-1},v^{2}_{n-1})\
\ \hbox{and}\ \  M_{2}=B(v^{2}_{n-1},v^{2}_{n-1}). $$ By Lemma
\ref{le1}, we get

\begin{eqnarray*}
&&\|M_{1}\|_{L^{\infty}((0,T),L^{q,\infty}(\mathbb{R}^{3}))}\\
&\leq&\|B(v^{1}_{n-1},v^{1}_{n-1})\|_{L^{\infty}((0,T),L^{q,\infty}(\mathbb{R}^{3}))}
+\|B(v^{1}_{n-1},v^{2}_{n-1})\|_{L^{\infty}((0,T),L^{q,\infty}(\mathbb{R}^{3}))}\\
&&+\|B(v^{2}_{n-1},v^{1}_{n-1})\|_{L^{\infty}((0,T),L^{q,\infty}(\mathbb{R}^{3}))}\\
&\leq&C\|v^{1}_{n-1}\|^{2}_{L^{\infty}((0,T),L^{q,\infty}(\mathbb{R}^{3}))}
+2\|v^{1}_{n-1}\|^{2}_{L^{\infty}((0,T),L^{q,\infty}(\mathbb{R}^{3}))}\|v^{2}_{n-1}\|^{2}_{L^{p,\infty}((0,T),L^{q,\infty}(\mathbb{R}^{3}))}\\
&\leq&C.
\end{eqnarray*}
and
$\|M_{2}\|_{L^{p,\infty}((0,T),L^{q,\infty}(\mathbb{R}^{3}))}\lesssim\|v^{2}_{n-1}\|^{2}_{L^{p,\infty}((0,T),L^{q,\infty}(\mathbb{R}^{3}))}\lesssim\varepsilon^{2}.$
Thus, according to Proposition \ref{pro1}, we have
$B(v_{n-1},v_{n-1})\in\widetilde{L}^{p,\infty}((0,T),L^{q,\infty}(\mathbb{R}^{3}))$.

We will prove that for every $n\in\mathbb{N}$,
$$\|v_{n}\|_{L^{p,\infty}((0,T),L^{q,\infty}(\mathbb{R}^{3}))}\leq2\|e^{-t(-\Delta)^{\beta}}u_{0}\|_{L^{p,\infty}((0,T),L^{q,\infty}(\mathbb{R}^{3}))}.$$
Since $v_{0}=e^{-t(-\Delta)^{\beta}}u_{0}$, it is obvious that
$$\|v_{0}\|_{L^{p,\infty}((0,T),L^{q,\infty}(\mathbb{R}^{3}))}\leq2\|e^{-t(-\Delta)^{\beta}}u_{0}\|_{L^{p,\infty}((0,T),L^{q,\infty}(\mathbb{R}^{3}))}.$$
Assume that this is true for a $n\in\mathbb{N}.$ Then, we have
\begin{eqnarray*}
\|v_{n+1}\|_{L^{p,\infty}((0,T),L^{q,\infty}(\mathbb{R}^{3}))}&\leq&\|v_{0}\|_{L^{p,\infty}((0,T),L^{q,\infty}(\mathbb{R}^{3}))}
+\|B(v_{n},v_{n})\|_{L^{p,\infty}((0,T),L^{q,\infty}(\mathbb{R}^{3}))}\\
&\leq&\|e^{-t(-\Delta)^{\beta}}u_{0}\|_{L^{p,\infty}((0,T),L^{q,\infty}(\mathbb{R}^{3}))}+4C\|e^{-t(-\Delta)^{\beta}}u_{0}\|^{2}_{L^{p,\infty}((0,T),L^{q,\infty}(\mathbb{R}^{3}))}.
\end{eqnarray*}
Taking
$4C\|e^{-t(-\Delta)^{\beta}}u_{0}\|_{L^{p,\infty}((0,T),L^{q,\infty}(\mathbb{R}^{3}))}<1$,
we get
$$\|v_{n+1}\|_{L^{p,\infty}((0,T),L^{q,\infty}(\mathbb{R}^{3}))}\leq2\|e^{-t(-\Delta)^{\beta}}u_{0}\|_{L^{p,\infty}((0,T),L^{q,\infty}(\mathbb{R}^{3}))},$$
that is,
$\|v_{n+1}\|_{L^{p,\infty}((0,T),L^{q,\infty}(\mathbb{R}^{3}))}$ in
the ball centered at $0$, of radius
$2\|e^{-t(-\Delta)^{\beta}}u_{0}\|_{L^{p,\infty}((0,T),L^{q,\infty}(\mathbb{R}^{3}))}.$
Then,
\begin{eqnarray*}
&&\|v_{n}-v_{n-1}\|_{L^{p,\infty}((0,T),L^{q,\infty}(\mathbb{R}^{3}))}\\
&\leq&\|B(v_{n-1}-v_{n-2},v_{n-1})\|_{L^{p,\infty}((0,T),L^{q,\infty}(\mathbb{R}^{3}))}+\|B(v_{n-2},v_{n-1}-v_{n-2})\|_{L^{p,\infty}((0,T),L^{q,\infty}(\mathbb{R}^{3}))}\\
&\leq&
C\|v_{n-1}-v_{n-2}\|_{L^{p,\infty}((0,T),L^{q,\infty}(\mathbb{R}^{3}))}
\left(\|v_{n-1}\|_{L^{p,\infty}((0,T),L^{q,\infty}(\mathbb{R}^{3}))}+\|v_{n-2}\|_{L^{p,\infty}((0,T),L^{q,\infty}(\mathbb{R}^{3}))}\right)\\
&\leq&
4C\|e^{-t(-\Delta)^{\beta}}u_{0}\|_{L^{p,\infty}((0,T),L^{q,\infty}(\mathbb{R}^{3}))}\|v_{n-1}-v_{n-2}\|_{L^{p,\infty}((0,T),L^{q,\infty}(\mathbb{R}^{3}))}.
\end{eqnarray*}
 Thus, the Picard contraction principle completes the proof.
\end{proof}

Now, we want to give   the reverse result of Theorem \ref{th5}. To
do this, we need the following lemma.

\begin{lemma}\label{le6}
Let $\frac{1}{2}<\beta< 1$ and let $\frac{3}{2\beta-1}<q<\infty$ and
$\frac{2\beta}{2\beta-1}<p<\infty$ such that
$\frac{2\beta}{(2\beta-1)p}+\frac{3}{(2\beta-1)q}=1$ and
$u\in\widetilde{L}^{p,\infty}((0,T),L^{q,\infty}(\mathbb{R}^{3}))$
be a mild solution to equations (\ref{eq1e}). Then, for
$0<\varepsilon<1$, there exists $0<t_{0}<T$ such that $\forall
t\in(0,t_{0}]$,
$\|u(t)\|_{L^{q,\infty}(\mathbb{R}^{3})}\leq\frac{\varepsilon}{2C_{0}t^{1/p}}$.
\end{lemma}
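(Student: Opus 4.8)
The plan is to control the Kato-type quantity $\Phi(\tau)=\sup_{0<t\le\tau}t^{1/p}\|u(t)\|_{L^{q,\infty}(\mathbb{R}^{3})}$ and to show $\Phi(\tau)\to 0$ as $\tau\to 0$; the stated bound then follows by choosing $t_{0}$ so small that $\Phi(t_{0})\le\varepsilon/(2C_{0})$. Two ingredients are needed: a decay statement for the linear evolution, and a self-similar bound for the bilinear term. For the linear part, write $v_{0}=e^{-t(-\Delta)^{\beta}}u_{0}=u+B(u,u)$ and observe that $v_{0}\in\widetilde{L}^{p,\infty}((0,T),L^{q,\infty}(\mathbb{R}^{3}))$: the term $u$ is there by hypothesis, and splitting $u=u_{1}+u_{2}$ with $u_{1}\in L^{\infty}$ and $\|u_{2}\|_{L^{p,\infty}}$ small (Proposition~\ref{pro1}(3)), parts (2) and (3) of Lemma~\ref{le1} put $B(u_{1},u_{1}),B(u_{1},u_{2}),B(u_{2},u_{1})$ in $L^{\infty}$ and $B(u_{2},u_{2})$ in $L^{p,\infty}$ with small norm, so $B(u,u)\in\widetilde{L}^{p,\infty}$ by Proposition~\ref{pro1}(3). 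Since the kernel of $e^{-t(-\Delta)^{\beta}}$ is nonnegative with integral $1$, the map $t\mapsto g(t):=\|v_{0}(t)\|_{L^{q,\infty}}$ is non-increasing, and for non-increasing $g$ one has $\|g\|_{L^{p,\infty}(0,T)}=\sup_{t}t^{1/p}g(t)$; the characterisation in Proposition~\ref{pro1}(2) applied to $v_{0}$ then forces $t^{1/p}g(t)\to 0$, i.e. $G^{\ast}(\tau):=\sup_{t\le\tau}t^{1/p}\|v_{0}(t)\|\to 0$ as $\tau\to0$.

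For the bilinear term I would start from \eqref{eq5} and use $\|u(s)\|_{L^{q,\infty}}\le\Phi(\tau)s^{-1/p}$ for $s\le\tau$. The scaling relation $\tfrac{2\beta}{(2\beta-1)p}+\tfrac{3}{(2\beta-1)q}=1$ is equivalent to $\tfrac1p+\tfrac{3}{2\beta q}=1-\tfrac1{2\beta}$, which makes the kernel exponent $a:=\tfrac{1}{2\beta}(1+\tfrac3q)$ equal to $1-\tfrac1p$, so that
\[
t^{1/p}\|B(u,u)(t)\|_{L^{q,\infty}}\le\Phi(\tau)^{2}\,t^{1/p}\int_{0}^{t}(t-s)^{-a}s^{-2/p}\,ds=C_{\ast}\,\Phi(\tau)^{2},
\]
where $C_{\ast}=B(\tfrac1p,1-\tfrac2p)$ and the Beta integral converges because $p>\tfrac{2\beta}{2\beta-1}>2$. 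Combining this with the linear estimate gives $\Phi(\tau)\le G^{\ast}(\tau)+C_{\ast}\Phi(\tau)^{2}$; as $\Phi$ is non-decreasing, on the range where $\Phi<1/(2C_{\ast})$ this quadratic inequality yields $\Phi(\tau)\le 2G^{\ast}(\tau)\to0$, which is exactly the conclusion.

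The step I expect to be the real obstacle is knowing a priori that $\Phi(\tau)$ is finite (and small) for small $\tau$: membership in $\widetilde{L}^{p,\infty}$ controls only the time-distribution of $\|u(t)\|$, not its pointwise size, and the near-diagonal part $\int_{t/2}^{t}(t-s)^{-a}\|u(s)\|^{2}\,ds$ of $B(u,u)$ is genuinely not pointwise dominated by the $L^{p,\infty}$ norm (a spike of $\|u\|$ near $s=t$ can make it blow up). The complementary early part is harmless: using $(t-s)^{-a}\le(t/2)^{-a}$ together with the local bound $\int_{0}^{t/2}\|u\|^{2}\lesssim\|u\|_{L^{p,\infty}}^{2}(t/2)^{1-2/p}$ (valid since $p>2$), its contribution to $t^{1/p}\|B(u,u)(t)\|$ is bounded by $C\|u\|_{L^{p,\infty}}^{2}$ and can be made small via the tilde-splitting.

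To control the diagonal part rigorously I would identify $u$ with a Kato solution. The function $v_{0}$ lies in $E_{T}=\{v:\ \sup_{(0,T)}t^{1/p}\|v(t)\|_{L^{q,\infty}}<\infty,\ t^{1/p}\|v(t)\|_{L^{q,\infty}}\to0\}$, with $\|v_{0}\|_{E_{T}}\to0$ as $T\to0$ by the first paragraph; the bilinear bound in $E_{T}$ is precisely the computation above, so for $T$ small the Picard scheme built on $v_{0}$ converges in $E_{T}$ to a mild solution $\tilde{u}\in E_{T}$ with the same initial data $u_{0}$. Since $u$ and $\tilde{u}$ are both mild solutions in $\widetilde{L}^{p,\infty}$ sharing the same linear part $v_{0}$, their difference has zero linear part, and the contraction argument of Proposition~\ref{pro2} on a small interval $(0,t_{0})$ forces $u=\tilde{u}$. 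Then $u\in E_{t_{0}}$, so $\Phi(\tau)$ is finite and tends to $0$, and the proof closes.
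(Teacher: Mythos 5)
Your proof is correct, but it takes a genuinely different route from the paper's. The paper argues locally near each fixed $t$: from the weak-type characterization in Proposition \ref{pro1}(2) it extracts, by a measure-counting (pigeonhole) argument, a ``good'' time $\theta\in[t-t/4^{p},t]$ with $\|u(\theta)\|_{L^{q,\infty}}\leq\varepsilon/(4C_{0}t^{1/p})$; it then solves locally in $L^{\infty}((\theta,\theta+T^{*}),L^{q,\infty}(\mathbb{R}^{3}))$ from the data $u(\theta)$ via Lemma \ref{le2}, and invokes Proposition \ref{pro2} exactly as stated (agreement at the interior time $\theta$) to get $\|u(t)\|_{L^{q,\infty}}\leq2\|u(\theta)\|_{L^{q,\infty}}\leq\varepsilon/(2C_{0}t^{1/p})$. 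You instead work from $t=0$: you show the linear part $v_{0}=u+B(u,u)$ lies in $\widetilde{L}^{p,\infty}$, upgrade this to pointwise Kato decay $\sup_{t\leq\tau}t^{1/p}\|v_{0}(t)\|\to0$ via the monotonicity of $t\mapsto\|e^{-t(-\Delta)^{\beta}}u_{0}\|_{L^{q,\infty}}$ (the stable semigroup kernel being a probability density) --- an observation nowhere in the paper --- then build a Kato solution by Picard iteration in the weighted space $E_{T}$ and identify it with $u$. Your diagnosis of the real obstacle (a priori finiteness of $\Phi$) is exactly the right concern, and your fix is sound; note that what you end up proving is the stronger statement \eqref{eq11}, so your argument subsumes Theorem \ref{the7} as well as Lemma \ref{le6}. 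Two caveats: Proposition \ref{pro2} cannot be cited as stated, since it requires agreement at an interior $\theta\in(0,T)$, so you must (and, as you indicate, can verbatim) rerun its contraction for two mild solutions sharing the same linear part on $(0,t_{0})$, using that $\|u\|_{L^{p,\infty}((0,t_{0}))}\to0$ as $t_{0}\to0$ for $u\in\widetilde{L}^{p,\infty}$; and for $B(u_{2},u_{1})$ you need the transposed version of Lemma \ref{le1}(2), which holds because the underlying estimate \eqref{eq5} is symmetric. In exchange for requiring two ingredients the paper never constructs (the linear-decay observation and existence in the Kato space), your route is more systematic and delivers the uniform decay in one stroke, whereas the paper's pigeonhole argument is more economical relative to its stated toolkit, using Lemma \ref{le2} and Proposition \ref{pro2} purely as black boxes.
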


\begin{proof}
It follows from
$u\in\widetilde{L}^{p,\infty}((0,T),L^{q,\infty}(\mathbb{R}^{3}))$
that   for all $\lambda>0$, there exists a constant $C(\lambda)$,
depending on $\lambda$, such that $C(\lambda)\rightarrow0$
$(\hbox{as}\ \lambda\rightarrow\infty)$ and
$$\left|\left\{\|u(t)\|_{L^{q,\infty}(\mathbb{R}^{3})}>\lambda\right\}\right|<\frac{C(\lambda)}{\lambda^{p}}.$$
Let $0<\varepsilon<1$ and $0<t_{0}<1$. Denote
$\lambda_{t_{0}}=\frac{\varepsilon}{4C_{0}t_{0}^{1/p}}$. When
$t_{0}\rightarrow0$ and $C(\lambda_{t_{0}})\rightarrow0$, we choose
$t_{0}$ small enough such that
$C(\lambda_{t_{0}})<\frac{\varepsilon^{p}}{2\times4^{2p}C_{0}^{p}}.$
Let $t\leq t_{0}$ such that
$\lambda_{t}=\frac{\varepsilon}{4C_{0}t^{1/p}}\geq\lambda_{t_{0}}$,
then $C(\lambda_{t})\leq C(\lambda_{t_{0}})$. We can get
\begin{equation}\label{eq9}
\left|\left\{t\in(0,T),\|u(t)\|_{L^{q,\infty}(\mathbb{R}^{3})}>\lambda_{t}\right\}\right|<\frac{C(\lambda_{t})}{\lambda_{t}^{p}}
<\frac{t}{2\times 4^{p}}.
\end{equation}
We claim that there exists $\theta$ such that
$$t-\frac{t}{4^{p}}\leq\theta\leq  t\quad\text{and}\quad\|u(\theta)\|_{L^{q,\infty}(\mathbb{R}^{3})}\leq\frac{\varepsilon}{4C_{0}t^{1/p}}=\lambda_{t}.$$
Otherwise
$$\left|\left\{t\in(0,T),\|u(t)\|_{L^{q,\infty}(\mathbb{R}^{3})}>\lambda_{t}\right\}\right|\geq\left|[t-\frac{t}{4^{p}},t]\right|=\frac{t}{4^{p}}.$$
This is a contraction to (\ref{eq9}). Let
$T^{\ast}=(4C_{0}\|u(\theta)\|_{L^{q,\infty}(\mathbb{R}^{3})})^{-p}$.
Taking $0<\varepsilon<1$, we have
\begin{equation}\label{eq10}
\|u(\theta)\|_{L^{q,\infty}(\mathbb{R}^{3})}\leq\frac{1}{4C_{0}t^{1/p}}\Longrightarrow
t\leq(4C_{0}\|u(\theta)\|_{L^{q,\infty}(\mathbb{R}^{3})})^{-p}.
\end{equation}
 Applying
Lemma \ref{le2} in the interval $[\theta,\theta+T^{\ast}]$, there
exists  a solution $\tilde{u}\in
L^{\infty}((\theta,\theta+T^{\ast}),L^{q,\infty}(\mathbb{R}^{3}))$
to the  equations (\ref{eq1e}). Note that (\ref{eq10}) implies that
$$(\theta,t]\subset(\theta,\theta+t)\subset(\theta,\theta+T^{\ast}).$$
By Proposition \ref{pro2}, we know  $u=\tilde{u}$ on $(\theta,t]$.
So for $t\leq t_{0}$, there exists $0<\theta<t$ such that $u\in
L^{\infty}((\theta,t],L^{q,\infty}(\mathbb{R}^{3}))$ and
$$\forall s\in(\theta,t], \|u(s)\|_{L^{q,\infty}(\mathbb{R}^{3})}\leq2\|u(\theta)\|_{L^{q,\infty}(\mathbb{R}^{3})}\leq\frac{\varepsilon}{2C_{0}t^{1/p}}.$$
This completes the proof of this lemma.
\end{proof}

\begin{theorem}\label{the7}
For $\frac{1}{2}<\beta< 1$ and let $\frac{3}{2\beta-1}<q<\infty$ and
$\frac{2\beta}{2\beta-1}<p<\infty$ such that
$\frac{2\beta}{(2\beta-1)p}+\frac{3}{(2\beta-1)q}=1$ and
$u\in\widetilde{L}^{p,\infty}((0,T),L^{q,\infty}(\mathbb{R}^{3}))$
be a mild solution to equations (\ref{eq1e}). Then
\begin{equation} \label{eq11}
\left\{ \begin{aligned}
        &\sup_{t\in(0,T)}t^{1/p}\|u(t)\|_{L^{q,\infty}(\mathbb{R}^{3})}<\infty, \\
                  &t^{1/p}\|u(t)\|_{L^{q,\infty}(\mathbb{R}^{3})}\longrightarrow0\quad(\hbox{as}\ t\rightarrow0).
                          \end{aligned} \right.
                          \end{equation}
\end{theorem}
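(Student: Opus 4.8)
The plan is to read off both assertions in (\ref{eq11}) from Lemma \ref{le6}, upgrading its quantitative bound near $t=0$ to the decay statement and combining it with a propagation-of-boundedness argument away from the origin to obtain the finiteness of the supremum. Observe first that (\ref{eq11}) is precisely the hypothesis (\ref{eq:2}) of Lemma \ref{le3}, so the content here is the converse implication: a mild solution already known to lie in $\widetilde{L}^{p,\infty}((0,T),L^{q,\infty}(\mathbb{R}^{3}))$ automatically enjoys the pointwise-in-time behavior (\ref{eq11}).

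First I would dispose of the decay, i.e.\ the second line of (\ref{eq11}). Fix an arbitrary $\varepsilon\in(0,1)$. Since $u$ is a mild solution belonging to $\widetilde{L}^{p,\infty}((0,T),L^{q,\infty}(\mathbb{R}^{3}))$, Lemma \ref{le6} furnishes a threshold $t_{0}=t_{0}(\varepsilon)\in(0,T)$ such that $\|u(t)\|_{L^{q,\infty}(\mathbb{R}^{3})}\leq \varepsilon/(2C_{0}t^{1/p})$ for every $t\in(0,t_{0}]$. Multiplying by $t^{1/p}$ gives $t^{1/p}\|u(t)\|_{L^{q,\infty}(\mathbb{R}^{3})}\leq \varepsilon/(2C_{0})$ on $(0,t_{0}]$, and since $\varepsilon>0$ was arbitrary this forces $t^{1/p}\|u(t)\|_{L^{q,\infty}(\mathbb{R}^{3})}\to 0$ as $t\to 0$, which is exactly what is claimed.

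Next I would prove finiteness of the supremum, the first line of (\ref{eq11}). Split $(0,T)=(0,t_{0}]\cup[t_{0},T)$ with $t_{0}$ the threshold obtained above for, say, $\varepsilon=1$. On $(0,t_{0}]$ the previous step already bounds $t^{1/p}\|u(t)\|_{L^{q,\infty}(\mathbb{R}^{3})}$ by $1/(2C_{0})$. On $[t_{0},T)$ one has $t^{1/p}\leq T^{1/p}$, so it suffices to bound $\|u(t)\|_{L^{q,\infty}(\mathbb{R}^{3})}$ uniformly there. For this I would re-run the mechanism inside the proof of Lemma \ref{le6}, but now asking only for plain boundedness rather than the sharp $t^{-1/p}$ rate: because $u\in\widetilde{L}^{p,\infty}\subset L^{p,\infty}$, on every subinterval of positive length the set of ``bad times'' $\{\|u(t)\|_{L^{q,\infty}(\mathbb{R}^{3})}>\lambda\}$ has measure controlled by $C(\lambda)\lambda^{-p}$, so for $\lambda$ large a ``good time'' $\theta$ with $\|u(\theta)\|_{L^{q,\infty}(\mathbb{R}^{3})}\leq\lambda$ can be selected in it. Lemma \ref{le2} then produces a mild solution on $[\theta,\theta+T^{\ast}]$, with $T^{\ast}=(4C_{0}\|u(\theta)\|_{L^{q,\infty}(\mathbb{R}^{3})})^{-p}$, satisfying $\|u(s)\|_{L^{q,\infty}(\mathbb{R}^{3})}\leq 2\|u(\theta)\|_{L^{q,\infty}(\mathbb{R}^{3})}$ on that interval, and Proposition \ref{pro2} identifies $u$ with it on the overlap. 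Chaining these stability intervals to cover $[t_{0},T)$ yields the desired uniform bound, and combining the two regions gives $\sup_{t\in(0,T)}t^{1/p}\|u(t)\|_{L^{q,\infty}(\mathbb{R}^{3})}<\infty$.

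The routine part is the decay near $0$, which is a one-line consequence of Lemma \ref{le6}. The main obstacle is the uniform control on $[t_{0},T)$, the region bounded away from the origin: there the clean self-similar rate of Lemma \ref{le6} is unavailable, so one must instead propagate an a priori bound by chaining the local existence of Lemma \ref{le2} with the uniqueness of Proposition \ref{pro2}. The delicate point is to arrange the stability intervals $[\theta,\theta+T^{\ast}]$ so that they overlap and reach up to the endpoint $T$, ensuring that the selected good times do not force $T^{\ast}$ to shrink uncontrollably as $t\to T$.
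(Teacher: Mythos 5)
Your proposal is correct and matches the paper's own proof in all essentials: the decay statement is read off from Lemma \ref{le6} exactly as you do, and the uniform bound away from $t=0$ is obtained by the same mechanism of selecting good times via the weak-Lorentz measure bound, restarting the solution there with Lemma \ref{le2}, identifying it with $u$ via Proposition \ref{pro2}, and chaining finitely many stability intervals of uniform length up to the finite endpoint $T$. The only difference is organizational: the paper packages each propagation step as an application of Lemma \ref{le6} to the time-translated solution $\tilde{u}(s)=u(t_{2}-\theta_{2}+s)$, iterated finitely many times, while you inline that lemma's internal argument directly on subintervals of $[t_{0},T)$; the ``delicate point'' you flag is resolved in both versions by the fact that good times satisfy $\|u(\theta)\|_{L^{q,\infty}(\mathbb{R}^{3})}\leq\lambda$, so the step length $T^{\ast}\geq(4C_{0}\lambda)^{-p}$ is bounded below uniformly.
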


\begin{proof}
 By Lemma \ref{le6}, for every $\varepsilon>0$, there exists $t_{0}$ such
 that, for all $t\in (0,t_{0}),$
 $$t^{1/p}\|u(t)\|_{L^{q,\infty}(\mathbb{R}^{3})}\leq\frac{\varepsilon}{2C_{0}},$$
that is,
$\lim_{t\rightarrow0}t^{1/p}\|u(t)\|_{L^{q,\infty}(\mathbb{R}^{3})}=0.$

 Now we
prove the first assertion of (\ref{eq11}). Checking the proof of
Lemma \ref{le6} and taking $\varepsilon=\frac{1}{2}$, we can see
that there exist $t_{0}$ such that
 for every $t\leq t_{0}$ and  $0<\theta<t$ such that $u\in
L^{\infty}((\theta,t],L^{q,\infty}(\mathbb{R}^{3}))$ and
\begin{equation}\label{eq12} \forall
s\in(\theta,t],\quad
\|u(s)\|_{L^{q,\infty}(\mathbb{R}^{3})}\leq2\|u(\theta)\|_{L^{q,\infty}(\mathbb{R}^{3})}\leq\frac{1}{4C_{0}t^{1/p}}.
\end{equation}
 On the other hand, Lemma \ref{le6} and
$\lim_{t\rightarrow0}t^{1/p}\|u(t)\|_{L^{q,\infty}(\mathbb{R}^{3})}=0$
 tell us that there exists $t_{1}$ such that for $s\in(0,t_{1})$,
$t^{1/p}\|u(t)\|_{L^{q,\infty}(\mathbb{R}^{3})}\leq C$. If
$t_{0}>t_{1}$, take $t_{2}<t_{1}<t_{0}$ (otherwise take
$t_{2}=t_{0}$). By (\ref{eq12}), there exists $\theta_{2}$ such that
for every $s\in (\theta_{2},t_{2}]$,
$\|u(s)\|_{L^{q,\infty}(\mathbb{R}^{3})}\leq\frac{1}{4C_{0}t_{2}^{1/p}}$.
Because $t^{1/p}\|u(t)\|_{L^{q,\infty}(\mathbb{R}^{3})}$ is bounded
on $(0,\theta_{2}]\subset (0,t_{1}),$ now we restrict
$t\in(\theta_{2},T]$. Define a new function
$\tilde{u}(s)=u(t_{2}-\theta_{2}+s)$. Then we only need to prove the
assertion for $\tilde{u}(s)$ on
$s\in(\theta_{2},T+t_{2}-\theta_{2}]$.

Since $u$ is a solution to equations (\ref{eq1e}),
$$u\in\widetilde{L}^{p,\infty}((t_{2},T),L^{q,\infty}(\mathbb{R}^{3}))
\Longrightarrow\tilde{u}\in\widetilde{L}^{p,\infty}((\theta_{2},T-t_{0}+\theta_{2}),L^{q,\infty}(\mathbb{R}^{3}))$$
implies that $\tilde{u}$ is also a solution to the  equations
(\ref{eq1e}). By Lemma \ref{le6} for $\varepsilon=\frac{1}{2}$
again, we can get that for $\forall t\in(\theta_{2},t_{2})$,
$\|\tilde{u}(t)\|_{L^{q,\infty}(\mathbb{R}^{3})}\leq\frac{1}{4C_{0}t_{2}^{1/p}}$.
That is,  $\forall t\in(t_{2}, 2t_{2}-\theta_{2})$,
$\|u(t)\|_{L^{q,\infty}(\mathbb{R}^{3})}\leq\frac{1}{4C_{0}t^{1/p}_{2}}$.
We conclude that
$$\forall t\in(\theta_{2},2t_{2}-\theta_{2}),\quad \|u(t)\|_{L^{q,\infty}(\mathbb{R}^{3})}\leq\frac{1}{4C_{0} t^{1/p}_{2}}.$$
Since $T$ is finite, we can find a constant $n\in \mathbb{N}$ such
that $nt_{2}<T<(n+1)t_{2}$. Hence repeating this argument  finite
many  times, we get
$$\forall t\in(\theta_{2}, T], \quad \|u(t)\|_{L^{q,\infty}(\mathbb{R}^{3})}\leq\frac{1}{4C_{0}t^{1/p}_{2}}<\frac{1}{4C_{0}t^{1/p}_{2}}\frac{T^{1/p}}{t^{1/p}}.$$
This completes the proof of this theorem.
\end{proof}
\vspace{0.1in} \noindent
 {\bf{Acknowledgements.}} We would like to thank our  supervisor Professor Jie Xiao
  for suggesting the problem and kind  encouragement.


\begin{thebibliography}{10}


\bibitem{M Cannone}
M. Cannone, \textit{A generalization of a theorem by Kato on
Navier-Stokes equations,} {Rev. Mat. Iberoam.,} \textbf{13} (1997),
673-697.

\bibitem{Cannone}
M. Cannone, \textit{Harmonic analysis tools for solving the
incompressible Navier-Stokes equations}, {In:  Handbook of
Mathematical Fluid Dynamics} {\bf{Vol} 3}(eds. S. Friedlander, D.
Serre), Elsevier, 2004, pp. 161-244.


\bibitem{G. Dafni J. Xiao}
G. Dafni and J. Xiao, \textit{Some new tent spaces and duality
theorem for fractional Carleson measures and
$Q_{\alpha}(\mathbb{R}^{n})$,} {J. Funct. Anal.,} \textbf{208}
(2004), 377-422.

\bibitem{G. Dafni J. Xiao 1}
G. Dafni and J. Xiao, \textit{The dyadic structure and atomic
decomposition of $Q$ spaces in several varialbes,} {Tohoku Math.
J.,} \textbf{57} (2005), 119-145.


\bibitem{M. Essen S. Janson L. Peng J. Xiao}
M. Essen, S. Janson, L. Peng  and J. Xiao, \textit{$Q$ space of
several real variables}, {Indiana Univ. Math. J.,} \textbf{49}
(2000), 575-615.


\bibitem{Y Giga T Miyakawa}
Y. Giga, T. Miyakawa, \textit{Navier-Stokes flow in $\mathbb{R}^{3}$
with measures as initial vorticity and Morry spaces,} {Comm. Partial
Differential Equtions,} \textbf{14} (1989), 577-618.


\bibitem{T Kato}
T. Kato, \textit{Strong $L^{p}-$solutions of the Navier-Stokes in
$\mathbb{R}^{n}$ with applications to weak solutions,} {Math.
Zeit.,} \textbf{187} (1984), 471-480.



\bibitem{H. Koch D. Tataru}
H. Koch and D. Tataru, \textit{Well-posedness for the Navier-Stokes
equations}, {Adv. Math.,} \textbf{157} (2001), 22-35.


\bibitem{LR}
P. G. Lemari$\acute{e}$-Rieusset, \textit{Recent Development in the
Naiver-Stokes Problem, in: Research Notes in Mathematics,}
\textbf{431}, {Chapman-Hall/CRC,} 2002.

\bibitem{LRP}
P. G. Lemari$\acute{e}$-Rieusset and N. Prioux, \textit{The
Naiver-Stokes equations with data in $bmo^{-1}$,} {Nonlinear
Analysis} \textbf{70} (2009), 280-297.

\bibitem{Leray}
L. Leray, \textit{Sur le mouvement d'un liquide visqueux emplissant
l'espace}, {Acta Math.,} \textbf{63} (1934), 193-248.


\bibitem{J L Lions}
J. L. Lions, \textit{Quelques m\'{e}thodes de r\'{e}solution des
probl\`{e}mes aux limites non lin\'{e}aires,} (French) {Paris:
Dunod/Gauthier-Villars}, 1969.

\bibitem{Li Zhai}
P. Li and Z. Zhai, \textit{Well-posedness and Regularity of
 Generalized Naiver-Stokes Equations in Some Critical Q-spaces,}
{Submitted.}


%\bibitem{C. Miao B. Yuan  B. Zhang}
%C. Miao, B. Yuan, B. Zhang,
% \textit{Well-posedness of the Cauchy problem for the fractional power dissipative equations,}
% {Nonlinear Anal. TMA.,} \textbf{68} (2008), 461-484.


%\bibitem{Stein 1}

%E. M. Stein, \textit{Singular Integrals and Differentiability
%Properties of Functions}, {Princeton University Press, Princeton,}
%{NJ,} 1970.

%\bibitem{H. Triebel}
%H. Triebel,
 %\textit{Theory of Function Spaces II}, {Birkh$\ddot{a}$user,
%Basel,} 1992.


\bibitem{J Wu 2}
J. Wu, \textit{The generalized incompressible Navier-Stokes
equations in Besov spaces}, {Dyn. Partial Differ. Eq.,} \textbf{1}
(2004), 381-400.

\bibitem{J Wu 3}
J. Wu, \textit{Lower Bounds for an integral involving fractional
Laplacians and the generalized Navier-Stokes equations in Besov
spaces,} {Commun. Math. Phys.,} \textbf{263} (2005), 803-831.


 \bibitem{J. Xiao 1}
J. Xiao,  \textit{Homothetic variant of fractional Sobolev space
with application to Navier-Stokes system}, {Dynamic of PDE.,}
\textbf{2} (2007), 227-245.


\end{thebibliography}
\end{document}